\newtheorem{thm}{Theorem}[section]
 \newtheorem{cor}[thm]{Corollary}
 \newtheorem{lem}[thm]{Lemma}
 \newtheorem{prop}[thm]{Proposition}
 \newtheorem{defn}[thm]{Definition}
 \newtheorem{rem}[thm]{Remark}
\def\XXint#1#2#3{{\setbox0=\hbox{$#1{#2#3}{\int}$}
     \vcenter{\hbox{$#2#3$}}\kern-.5\wd0}}
\title{Weak derivatives and metric differentiability almost everywhere}
\author{Nikita Evseev}
	\address{Analysis on Metric Spaces Unit, Okinawa Institute of Science and Technology Graduate University, 1919-1 Tancha, Onna-son, Okinawa 904-0495, Japan
	}
	\email{nikita2.evseev@gmail.com}
\thanks{The work was supported by JSPS Grant-in-Aid for Scientific Research No. 25K07038.} 
\begin{document}

\maketitle
\begin{abstract}
It is known that a Lipschitz continuous map from the Euclidean domain to a metric space is metrically differentiable almost everywhere. When the metric space is a Banach space dual to separable, the metric differential has its linear counterpart -- weak* differential. 
However, for an arbitrary metric or Banach space, a Lipschitz map is not necessarily weak* differentiable. This paper introduces an approach based on a concept of weak weak* derivatives. This framework yields a linear representation for the metric differential, allowing for its calculation as the norm of an associated linear operator.
\end{abstract}

\section{Introduction}
The seminal work \cite{K1994} of Kirchheim in 1992 established the metric version of Rademacher's theorem, which asserts that every Lipschitz continuous mapping into a metric space is metrically differentiable almost everywhere.
Metric differentiability is a notion which signifies that the distance between the values of a mapping can be locally approximated by a seminorm on $\mathbb R^n$, denoted $\textnormal{md}(f, x)$.
A mapping $f: \mathbb R^n\to X$ is said to be metrically differentiable at a point $x$ if
\[
\lim_{y\to x}\frac{d(f(y), f(x)) - \textnormal{md}(f, x)(y-x)}{|y-x|} = 0.
\]
Subsequently, Ambrosio and Kirchheim utilized an isometric embedding of an image of Lipschitz mapping into $V^*$, the dual of a separable Banach space  \cite{AK2000}.
They simplified the proof of Kirchheim's result by leveraging the weak* differentiability of a Lipschitz continuous mapping 
$f:\mathbb R^n\to V^*$.
The last means that there is a linear map $wdf(x):\mathbb R^n\to V^*$, weak* differential, such that
\[
\lim_{y\to x}\frac{\langle v, f(y) - f(x) - wdf(x)\cdot(y-x)\rangle}{|y-x|} = 0
\]
for all $v\in V$.
In \cite{AK2000}, it was shown that the metric differential of a Lipschitz continuous $f:\mathbb R^n\to V^*$ can be represented as the norm of the weak* differential, namely for all $\nu\in\mathbb R^n$
\begin{equation}\label{eq:md=wd}
\textnormal{md}(f, x)(\nu) = \| wdf(x)\cdot\nu\|_{V^*}
\end{equation}
for almost every $x\in\mathbb R^n$.
Then for a mapping valued into a metric space $f:\mathbb R^n\to X$ and an isometric embedding $\iota \colon X\to V^*$ 
we have the following realization of the metric differential
\begin{equation}\label{eq:md=wdi}
\textnormal{md}(f, x)(\nu) = \| wd(\iota\circ f)(x)\cdot\nu\|_{V^*}.
\end{equation} 
This raises a crucial question: what if the space $X$ possesses its own linear structure but is not the dual of a separable space? In such settings, formula  \eqref{eq:md=wd} is not directly applicable, as the Lipschitz mapping is not guaranteed to be weak* differentiable. 
Though formula \eqref{eq:md=wdi} is applicable, the embedding $\iota$ will not respect an existing linear structure.
This limitation serves as the primary motivation for our work.
Our objective is to identify a suitable notion of differential that fits into formula \eqref{eq:md=wd}, not only in the case of weak* differentiability, but also in more general settings.
Accordingly, in our main result, we introduce a version of such a differential.
\begin{thm}\label{theorem:main}
Let $\Omega\subset \mathbb R^n$ be an open set and 
$(X, d, z_0)$ be a pointed complete metric space.
Let $f:\Omega\to X$ be a Lipschitz continuous mapping. Then
\begin{enumerate}[(i)]
\item there exists a map from $\Omega$ to the space of bounded linear operators 
$\nabla^{\circ}f:\Omega \to \mathcal L(\mathbb R^n; \left(\textnormal{Lip}_{z_0}(X)\right)^*)$
such that for $\varphi\in \textnormal{Lip}_{z_0}(X)$ and $\nu\in\mathbb R^n$
\begin{equation}
\langle \varphi, \nabla^{\circ}f(x)\cdot\nu \rangle = \nabla\varphi\circ f(x)\cdot\nu \label{eq:nabla0}
\end{equation}
whenever $\varphi\circ f$ is classically differentiable; and
\item there exists a seminorm $\rho$ on $\left(\textnormal{Lip}_{z_0}(X)\right)^*$
such that for $\nu\in\mathbb R^n$
\[
\textnormal{md}(f, x)(\nu) = \rho(\nabla^{\circ}f(x)\cdot\nu)
\]
for almost every $x\in\Omega$.
\end{enumerate}
\end{thm}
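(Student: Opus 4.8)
The plan is to sidestep the absence of a genuine weak* differential by testing $f$ only against a fixed \emph{countable} family of distance functions, and to realize the seminorm $\rho$ as the pull-back of the $\ell^\infty$-norm under the operator dual to that family.

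\textbf{Setup.} Since $\Omega$ is separable and $f$ is continuous, $Y:=\overline{f(\Omega)}$ is separable; as only the restrictions of functions to $f(\Omega)$ enter the statement, I may assume $X$ is separable (keeping $z_0\in X$). Fix a countable dense set $\{x_k\}\subset X$ and put $\varphi_k:=d(\cdot,x_k)-d(z_0,x_k)$, so that $\varphi_k\in\textnormal{Lip}_{z_0}(X)$ with $\|\varphi_k\|_{\textnormal{Lip}}\le 1$. These functions recover distances, so the Kuratowski map $\iota:=(\varphi_k)_k:X\to\ell^\infty$ is an isometric embedding into $\ell^\infty=(\ell^1)^*$, whose predual $\ell^1$ is separable. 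Let $J:\ell^1\to\textnormal{Lip}_{z_0}(X)$ be the bounded operator $Je_k=\varphi_k$ (so $\|J\|\le 1$); its adjoint $J^*:\bigl(\textnormal{Lip}_{z_0}(X)\bigr)^*\to\ell^\infty$ is $J^*\xi=(\langle\varphi_k,\xi\rangle)_k$. I will take $\rho(\xi):=\|J^*\xi\|_{\ell^\infty}=\sup_k|\langle\varphi_k,\xi\rangle|$, which is a seminorm dominated by $\|\xi\|$.

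\textbf{Construction of $\nabla^{\circ}f$ (part (i)).} Fix once and for all a generalized limit $\mathrm{LIM}_{t\to 0}$, i.e. a linear functional on bounded real functions of $t$ lying between $\liminf$ and $\limsup$ and agreeing with $\lim$ whenever the latter exists (Hahn--Banach). For $x\in\Omega$ and each coordinate direction $e_i$ define $\xi_i(x)\in\bigl(\textnormal{Lip}_{z_0}(X)\bigr)^*$ by
\[
\langle \varphi,\xi_i(x)\rangle:=\mathrm{LIM}_{t\to 0}\ \frac{\varphi(f(x+te_i))-\varphi(f(x))}{t},\qquad \varphi\in\textnormal{Lip}_{z_0}(X).
\]
This is linear in $\varphi$ and, since $f$ is $L$-Lipschitz, bounded by $L\|\varphi\|_{\textnormal{Lip}}$, so $\xi_i(x)$ is a well-defined bounded functional; set $\nabla^{\circ}f(x)\cdot\nu:=\sum_i\nu_i\,\xi_i(x)$, a bounded linear operator $\mathbb R^n\to\bigl(\textnormal{Lip}_{z_0}(X)\bigr)^*$. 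If $\varphi\circ f$ is classically differentiable at $x$, then each difference quotient above converges, so $\mathrm{LIM}$ returns the genuine partial derivative $\partial_{e_i}(\varphi\circ f)(x)$; linearity of the total differential in the direction then yields $\langle\varphi,\nabla^{\circ}f(x)\cdot\nu\rangle=\nabla(\varphi\circ f)(x)\cdot\nu$, which is \eqref{eq:nabla0}.

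\textbf{Metric differential (part (ii)).} Since $\iota$ is isometric, $\textnormal{md}(f,x)=\textnormal{md}(\iota\circ f,x)$, and $\iota\circ f:\Omega\to\ell^\infty=(\ell^1)^*$ is a Lipschitz map into the dual of a separable space, so the Ambrosio--Kirchheim identity \eqref{eq:md=wd} applies: for a.e. $x$ one has $\textnormal{md}(f,x)(\nu)=\|wd(\iota\circ f)(x)\cdot\nu\|_{\ell^\infty}$. Pairing the weak* differential with $e_k\in\ell^1$ identifies its $k$-th component with $\partial_\nu(\varphi_k\circ f)(x)$, whence
\[
\textnormal{md}(f,x)(\nu)=\sup_k\,\bigl|\partial_\nu(\varphi_k\circ f)(x)\bigr|\qquad\text{for a.e.\ }x.
\]
Each $\varphi_k\circ f$ is Lipschitz, hence differentiable off a null set; on the countable intersection of these full-measure sets \eqref{eq:nabla0} gives $\langle\varphi_k,\nabla^{\circ}f(x)\cdot\nu\rangle=\partial_\nu(\varphi_k\circ f)(x)$ for every $k$. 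Thus $J^*\bigl(\nabla^{\circ}f(x)\cdot\nu\bigr)=wd(\iota\circ f)(x)\cdot\nu$, and therefore $\rho(\nabla^{\circ}f(x)\cdot\nu)=\|J^*(\nabla^{\circ}f(x)\cdot\nu)\|_{\ell^\infty}=\textnormal{md}(f,x)(\nu)$ almost everywhere.

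\textbf{Main obstacle.} The genuinely delicate point is part (i): one must produce a \emph{single} linear operator whose pairing with every $\varphi$ reproduces the classical partial derivatives of $\varphi\circ f$ wherever they exist, even though $f$ need not be weak* differentiable, because the predual $\textnormal{Lip}_{z_0}(X)$ is non-separable and the exceptional null sets $N_\varphi$ vary with $\varphi$ and may exhaust $\Omega$. The generalized-limit device circumvents this by committing to values along a basis, at the cost that $\nabla^{\circ}f(x)\cdot\nu$ need not be a weak* limit of difference quotients and its \emph{canonical} bidual norm need not equal $\textnormal{md}(f,x)(\nu)$ --- which is precisely why the theorem allows the weaker seminorm $\rho$ adapted to the countable family $\{\varphi_k\}$. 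The only other nontrivial input is the interchange yielding $\textnormal{md}(f,x)(\nu)=\sup_k|\partial_\nu(\varphi_k\circ f)(x)|$, which is not elementary and is exactly what \eqref{eq:md=wd} supplies via the isometric embedding $\iota$.
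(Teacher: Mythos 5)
Your proposal is correct, and while your part (i) is essentially the paper's own argument, your part (ii) takes a genuinely different route. For (i) you do exactly what the paper does in Lemma \ref{lemma:lip-derivative}: a Hahn--Banach generalized limit of difference quotients defines $\partial^{\circ}_j f(x)\in\left(\textnormal{Lip}_{z_0}(X)\right)^*$ with norm at most $\operatorname{Lip}(f)$, and \eqref{eq:nabla0} follows because the generalized limit returns the true partial derivatives wherever $\varphi\circ f$ is differentiable. For (ii) the paper stays intrinsic: it combines Kirchheim's metric Rademacher theorem with Theorem \ref{theorem:lip-derivative} (an absolute-continuity-on-lines/Fubini argument identifying $\textnormal{m}\partial_\nu f$ with $\sup_k|\partial_\nu(\varphi_k\circ f)|$ for an essential gauge sequence) and Theorem \ref{theorem:lip-derivative2}, passes from a countable dense set of directions to all of $\mathbb R^n$ by Lipschitz estimates, and then linearizes via Rademacher applied to the $\varphi_k\circ f$. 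You instead outsource precisely this hard step to Ambrosio--Kirchheim: the Kuratowski embedding $\iota=(\varphi_k)_k$ into $\ell^\infty=(\ell^1)^*$ makes \eqref{eq:md=wd} available, and pairing the weak* differential with the coordinate vectors $e_k\in\ell^1$ yields $\textnormal{md}(f,x)(\nu)=\sup_k|\nabla(\varphi_k\circ f)(x)\cdot\nu|$ for a.e.\ $x$ and all $\nu$; part (i) then converts the right-hand side into $\rho(\nabla^{\circ}f(x)\cdot\nu)$ with $\rho(w)=\sup_k|\langle\varphi_k,w\rangle|$ --- the same seminorm the paper constructs. Your route is shorter and inherits the ``all $\nu$, a.e.\ $x$'' uniformity directly from \cite{AK2000}; the paper's route is longer but embedding-free (which is its stated motivation) and produces reusable intermediate results (measurability of $\|\partial^{\circ}_\nu f\|$, the identity with the metric derivative, the Sobolev characterization). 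Two minor points: the reduction ``assume $X$ separable'' is unnecessary and slightly imprecise, since the conclusion concerns $\left(\textnormal{Lip}_{z_0}(X)\right)^*$ for the original $X$; but your construction only needs $\{x_k\}$ dense in $\overline{f(\Omega)}$, with each $\varphi_k=d(\cdot,x_k)-d(z_0,x_k)$ defined on all of $X$, so nothing is lost. Also, the separate Rademacher step is redundant: wherever $wd(\iota\circ f)(x)$ exists, pairing with $e_k$ already shows that each $\varphi_k\circ f$ is classically differentiable at $x$.
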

Naturally, our notion of differential relates to other linearized interpretations of metric differential,
for example, the one introduced by Gigli, Pasqualetto, and Soultanis in \cite{GPS2020}.
The primary difference lies in the target spaces of the differentials. 
However, both constructions coincide when representing the metric differential of a Lipschitz mapping
(see \cite[Theorem 4.7]{GPS2020} and Lemma \ref{lemma:md=rho}).

In (ii) of Theorem \ref{theorem:main}, $\rho$ appears as a seminorm, which accounts for the possible non-separability of the target space. When the target is separable, it is, in fact, a norm.
Our primary tool is the notion of weak weak* derivatives, which are essentially characterized by condition \eqref{eq:nabla0}. 
Such partial derivatives were introduced in \cite{CE:23} and \cite{CE:2021} and naturally arise for mappings that are absolutely continuous on almost every line, especially when the target space is not a dual of a separable Banach space or lacks the Radon–Nikodým property.
In this paper, we occasionally develop further properties of these derivatives. In particular, it is shown the measurability of the norm and 
\[
\|\partial_j^{\circ} f(x)\|_ {\left(\textnormal{Lip}_{z_0}(X)\right)^*} = 
\lim_{h\to 0} \frac{d(f(x+h\cdot e_j),f(x))}{|h|} \quad \text{ for almost every }  x\in\Omega
\]
Whenever $f$ has a locally integrable partial metric derivative.

\subsection{Organization.}
The paper is organized as follows.
In Section \ref{sec1} we collect necessary results
and definitions concerning the calculus of functions with values in metric spaces.
Then in Section \ref{sec2} we discuss the core object -- weak weak* derivatives. Essentially, in this section, we prove part (i) of  Theorem \ref{theorem:main}.
Section \ref{sec4} is devoted to the metric differentiability and part (ii) ot Theorem \ref{theorem:main} is proved there.
As well, we discuss metric differentiability in the topology of the Sobolev norm.
Finally, in Section \ref{sec5} we consider the case of a linear target.


\section{Preliminaries}\label{sec1}
Throughout this manuscript let $\Omega \subset\mathbb{R}^n$ be an open set, 
$p\in [1,\infty)$, and $(X, d, z_0)$ be a pointed complete metric space, $z_0\in X$ -- fixed point.
We denote by $V^*$ the dual Banach space of $V$. 
For  $v\in V$ and $\Lambda\in V^*$ we use notation $\langle v , \Lambda \rangle :=\Lambda(v)$.

\subsection{Measurability and integrals}
\label{subsec:meas-int}
We call a function $f\colon \Omega \to X$ \emph{measurable} if it is measurable with respect to the Borel $\sigma$-algebra on $X$ and the $\sigma$-algebra of Lebesgue measurable sets on $\Omega$. 
We call $h\colon \Omega \to X$ a \emph{representative} of $f\colon \Omega \to X$  if $h(x)=f(x)$ for almost every $x\in \Omega$.
For $p\in [1,\infty)$ we denote by $L^p(\Omega)$ the space of measurable functions $f\colon \Omega \to \mathbb{R}$ for which the Lebesgue integral $\int_\Omega |f(x)|^p\ \textnormal{d}x$  is finite. 
We say that $f\colon \Omega \to X$ is  \emph{essentially separably valued} if there is a Lebesgue nullset $\Sigma\subset \Omega$ such that $f(\Omega \setminus \Sigma)$ is separable.
We denote by $L^p(\Omega; X)$ the space of measurable essentially separably valued functions 
$f\colon \Omega \to X$ such that $x\mapsto d(f(x), z_0)$  defines an element of $L^p(\Omega)$.
Then we say that $f\in L^1_{\operatorname{loc}}(\Omega; X)$ if $f\in L^1(K; X)$ for any compact $K\subset\Omega$.

\subsection{Lipschitz maps and metric derivatives.}

We denote by $\textnormal{Lip}_{z_0}(X)$ the space of all
real valued Lipschitz functions on X, which vanish at the base point $z_0$.
It is a Banach space with respect to the norm
\[
\operatorname{Lip}(\varphi) := \sup\bigg\{ \frac{|\varphi(x) - \varphi(y)|}{d(x,y)} \colon x,y\in X \text{ and }  x\ne y \bigg\}.
\]

Let $\nu\in \mathbb R^n\setminus\{0\}$. We say that $f\colon \Omega \to X$ has a \emph{metric directional derivative} at $x\in \Omega$ if 
\begin{equation}
    \textnormal{m}\partial_{\nu} f(x):=\lim_{h\to 0} \frac{d(f(x+h\cdot \nu),f(x))}{|h|}\in [0,\infty)
\end{equation}
exists. When $\nu = e_j$ we denote by $\textnormal{m}\partial_j f(x)$ a \emph{$j$-th metric partial derivative}.
As well, for a function $\phi:\Omega\to\mathbb R$ we will use the following notation for the classical directional derivative
\[
 \frac{\partial\phi}{\partial\nu}(x) = \lim_{h\to 0} \frac{\phi(x+h\nu) - \phi(x)}{h}.
\]

A map $f\colon \Omega \to X$ is \emph{absolutely continuous on almost every line segment parallel to $\nu\in \mathbb R^n\setminus\{0\}$} 
if there is a family of lines $L$, which are parallel to $\nu$, such that $\mathcal{L}^n(\Omega\setminus\bigcup L)=0$ and for every $l\in L$ the map $f$ is absolutely continuous when restricted to a line segment contained in $l\cap \Omega$.
Crucially, such a map possesses metric directional derivatives almost everywhere, and these derivatives are themselves measurable functions.
\begin{lem}[{\cite[Lemma 2.4]{CJP22}}]\label{lemma2.4}
Let $f:\Omega\to X$ be a measurable essentially separably valued function.
Suppose that $f$ is absolutely continuous on almost every line segment parallel to $\nu\in \mathbb R^n\setminus\{0\}$,
then $\textnormal{m}\partial_\nu f(x)$ exists at almost every $x\in \Omega$ and the almost everywhere defined function $x\mapsto \textnormal{m}\partial_\nu f(x)$ is measurable.
\end{lem}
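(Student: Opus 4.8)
The plan is to reduce the $n$-dimensional statement to the one-dimensional theory of absolutely continuous curves by Fubini's theorem, while obtaining measurability directly from the difference quotients. For $h\neq0$ write $g_h(x):=d(f(x+h\nu),f(x))/|h|$, so that $\textnormal{m}\partial_\nu f(x)=\lim_{h\to0}g_h(x)$ whenever the limit exists. First I would establish that each $g_h$ is a measurable (a.e.\ defined) function, and here the essential separability of $f$ is exactly what is used: fixing a null set $\Sigma$ with $f(\Omega\setminus\Sigma)$ separable and a countable dense set $\{y_k\}$ in its closure, one has, for every $x$ outside the null set $\Sigma\cup(\Sigma-h\nu)$, the identity $d(f(x+h\nu),f(x))=\sup_k\big|d(f(x+h\nu),y_k)-d(f(x),y_k)\big|$. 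Since each $x\mapsto d(f(x),y_k)$ is measurable as a composition of the measurable map $f$ with the continuous function $d(\cdot,y_k)$, and translation preserves measurability, $g_h$ is measurable.

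Next I would reduce to lines. Decomposing $\mathbb R^n=H\oplus\mathbb R\nu$ with $H=\nu^{\perp}$ and writing $x=y+t\nu$, the hypothesis says that for $\mathcal H^{n-1}$-a.e.\ $y\in H$ the curve $t\mapsto f(y+t\nu)$ is absolutely continuous on each component of its domain. At this point I would invoke the classical fact that an absolutely continuous curve into a metric space is metrically differentiable almost everywhere; applied on each admissible line this yields the existence of $\lim_{h\to0}g_h(y+t\nu)=\textnormal{m}\partial_\nu f(y+t\nu)$ for a.e.\ $t$.

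It remains to transfer this ``a.e.\ on each line'' conclusion to an ``$\mathcal L^n$-a.e.'' one and to deduce measurability of the limit, and this is where the main care is required. To apply Fubini to the exceptional set I must first know it is measurable, yet the metric derivative is a limit over the continuum $h\in\mathbb R$. The remedy is that, since absolute continuity forces $f$ to be continuous along a.e.\ line, for a.e.\ $x$ the map $h\mapsto g_h(x)$ is continuous; hence $\limsup_{h\to0}g_h$ and $\liminf_{h\to0}g_h$ may be computed along rational $h$ and are therefore measurable. The set $N$ on which these two functions differ or are infinite is then measurable, and by the previous paragraph it meets a.e.\ line in a one-dimensional null set, so Fubini gives $\mathcal L^n(N)=0$. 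Consequently $\textnormal{m}\partial_\nu f$ exists a.e., and it agrees a.e.\ with the measurable function $\limsup_{h\to0}g_h$, which proves measurability. The single genuine obstacle, then, is this measurability bookkeeping---securing that the difference quotients and the exceptional set are measurable (via essential separability and a.e.\ continuity along lines) so that Fubini legitimately converts line-wise statements into ones valid $\mathcal L^n$-a.e.; the existence itself is immediate from the one-dimensional curve theory.
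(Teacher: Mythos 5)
Your proposal is correct. Note, however, that the paper does not prove this lemma at all: it is imported verbatim from \cite[Lemma 2.4]{CJP22}, so there is no in-paper argument to compare against. Your route is the standard one and is consistent with the techniques the paper uses elsewhere (e.g.\ in Theorem \ref{theorem:lip-derivative}): measurability of the difference quotients $g_h$ via a countable dense set in the essentially separable image, reduction to lines by Fubini, the one-dimensional fact that absolutely continuous curves in a metric space have metric derivatives almost everywhere, and--the genuinely delicate point, which you identify and resolve correctly--replacing the limit over continuum $h$ by a limit over rational $h$, legitimate because absolute continuity makes $h\mapsto g_h(x)$ continuous along almost every line, so that the exceptional set $N$ is measurable and Fubini can convert the line-wise null sets into an $\mathcal L^n$-null set.
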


A mapping $\Omega\to X$ is \emph{ metrically differentiable} at $x\in\Omega$, if there is a seminorm $\sigma_x:\mathbb R^n\to [0,\infty)$
such that
\begin{equation}\label{eq:def-md}
d(f(y), f(x)) - \sigma_x(y-x) = o(|y-x|).
\end{equation}
In this case, this seminorm is called a \emph{ metric differential} and denoted by $\textnormal{md}(f, x)(\cdot)$.
Recall that function $\sigma:\mathbb R^n\to [0,\infty)$ is a seminorm if $\sigma(\nu + \mu) \leq \sigma(\nu) + \sigma(\mu)$
and $\sigma(t\nu) = |t|\sigma(\nu)$ but can be vanish on a linear subspace of $\mathbb R^n$.
The main examples of metrically differentiable mappings are Lipschitz continuous mappings.  
 \begin{thm}[{\cite[Theorem 2]{K1994}}]
 Let $f:\Omega\to X$ be a Lipschitz continuous mapping, then $f$ is metrically differentiable almost everywhere.
 \end{thm}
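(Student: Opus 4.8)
The plan is to reduce the metric statement to the classical (real-valued) Rademacher theorem applied to a countable family of auxiliary Lipschitz functions. Since $f$ is continuous and $\Omega$ is separable, $f(\Omega)$ is separable; I fix a countable dense set $\{v_k\}_{k\ge 1}\subset f(\Omega)$ and put $u_k(x):=d(f(x),v_k)$. By the triangle inequality each $u_k$ is real-valued and Lipschitz with $\operatorname{Lip}(u_k)\le\operatorname{Lip}(f)$, so classical Rademacher gives a full-measure set $A\subset\Omega$ on which every $u_k$ is differentiable. Density of $\{v_k\}$ yields the representation
\begin{equation}
d(f(x),f(y))=\sup_k|u_k(x)-u_k(y)|,
\end{equation}
where $\ge$ is the reverse triangle inequality and $\le$ follows by letting $v_k\to f(x)$. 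For $x\in A$ I propose the candidate
\begin{equation}
\sigma_x(\nu):=\sup_k|\langle\nabla u_k(x),\nu\rangle|,
\end{equation}
which is bounded by $\operatorname{Lip}(f)\,|\nu|$, positively homogeneous, and subadditive (a supremum of seminorms), hence a genuine seminorm on $\mathbb R^n$; this will be the metric differential.

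For the lower bound I would exploit compactness of the unit sphere. Each $\ell_k(\nu):=|\langle\nabla u_k(x),\nu\rangle|$ is $\operatorname{Lip}(f)$-Lipschitz on $S^{n-1}$, so the family $\{\ell_k\}$ is equi-Lipschitz and, given $\epsilon>0$, a finite covering argument produces indices $k_1,\dots,k_M$ with $\max_{i}\ell_{k_i}(\nu)\ge\sigma_x(\nu)-\epsilon$ for all $\nu\in S^{n-1}$. For these finitely many indices differentiability at $x$ supplies a single radius $\delta>0$ controlling all remainders simultaneously, and combining with the representation above gives $d(f(y),f(x))\ge\max_i|u_{k_i}(y)-u_{k_i}(x)|\ge\sigma_x(y-x)-2\epsilon|y-x|$ whenever $|y-x|<\delta$. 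Thus $\liminf_{y\to x}(d(f(y),f(x))-\sigma_x(y-x))/|y-x|\ge 0$ at every $x\in A$.

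The matching upper bound is the crux and the main obstacle. Here, given $y$ near $x$, I would choose a near-optimal index $k(y)$ with $d(f(y),v_{k(y)})$ small; the triangle inequality then yields $d(f(y),f(x))\le(u_{k(y)}(x)-u_{k(y)}(y))+2d(f(y),v_{k(y)})\le\sigma_x(y-x)+|R_{k(y)}(x,y)|+2d(f(y),v_{k(y)})$, where $R_k(x,y):=u_k(y)-u_k(x)-\langle\nabla u_k(x),y-x\rangle$. To make the last two terms $o(|y-x|)$ I must take the approximation radius proportional to $|y-x|$, which forces $k(y)$ to range over infinitely many indices as $y\to x$; pointwise differentiability of a single $u_k$ is therefore not enough, and uniform differentiability over the whole family fails in general. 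The structural fact that rescues the argument is that $f$, being Lipschitz, sends each precompact ball $\overline{B(x,r)}$ to a totally bounded (indeed compact) subset of $X$, so at each dyadic scale $|y-x|\asymp 2^{-m}$ only finitely many $v_k$ are needed to approximate the values $f(y)$ at the required relative precision. I would then run a careful scale-by-scale argument over these finite index sets to control $R_{k(y)}$ uniformly down to the origin; handling the interaction between the shrinking scale and the changing finite families is exactly the delicate measure-theoretic point of the proof. Matching the two bounds shows that $\sigma_x$ realizes $\textnormal{md}(f,x)$ at almost every $x\in A$, which is the assertion; alternatively, the same conclusion can be reached by the Kuratowski-type isometric embedding of $f(\Omega)$ into $\ell^\infty=(\ell^1)^*$, for which weak* differentiability holds almost everywhere by componentwise Rademacher and dominated convergence.
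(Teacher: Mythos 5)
Your setup is sound: the auxiliary functions $u_k=d(f(\cdot),v_k)$, the representation $d(f(x),f(y))=\sup_k|u_k(x)-u_k(y)|$, the candidate seminorm $\sigma_x(\nu)=\sup_k|\langle\nabla u_k(x),\nu\rangle|$, and your lower bound via a finite net on $S^{n-1}$ are all correct. The genuine gap is exactly where you place it: the $\limsup$ inequality, and the rescue you sketch does not close it. Total boundedness of $f(\overline{B(x,r)})$ does give, at each scale $2^{-m}$, a finite set of indices approximating the values of $f$ to relative precision $o(2^{-m})$, but these finite sets change with $m$, and pointwise Rademacher gives no quantitative control on the radius below which the remainder $R_k(x,\cdot)$ of a given $u_k$ is small: the indices entering at scale $2^{-m}$ may have ``good radii'' far smaller than $2^{-m}$, and no full-measure set chosen in advance fixes this. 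Your fallback has the same gap in disguise: for $F=\iota\circ f:\Omega\to\ell^\infty$, componentwise Rademacher does give weak* differentiability a.e., and weak* lower semicontinuity of the norm gives the $\liminf$ bound; but the $\limsup$ bound $\limsup_{t\to 0}\|F(x+t\nu)-F(x)\|_{\ell^\infty}/|t|\le\|wdF(x)\cdot\nu\|_{\ell^\infty}$ does not follow pointwise from weak* differentiability at $x$ (weak* convergence controls the norm only from below), and ``dominated convergence'' is not applicable since there is nothing dominating the supremum over $k$. This is precisely why the metric Rademacher theorem requires an idea beyond the classical one.

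The idea that closes the gap (Kirchheim \cite{K1994}, Ambrosio--Kirchheim \cite{AK2000}; within this paper it is exactly the mechanism of Theorem \ref{theorem:lip-derivative}) is to replace pointwise remainder control by integral control along lines. Fix $\nu$; each $u_k$ is Lipschitz, hence absolutely continuous on every segment, so simultaneously for all $k$ one has $|u_k(x+t\nu)-u_k(x)|\le\int_0^{t}G_\nu(x+s\nu)\,\textnormal{d}s$, where $G_\nu:=\sup_k|\langle\nabla u_k,\nu\rangle|$ is measurable and bounded by $\operatorname{Lip}(f)|\nu|$. Taking the supremum over $k$ gives $d(f(x+t\nu),f(x))\le\int_0^{t}G_\nu(x+s\nu)\,\textnormal{d}s$, and Fubini together with the one-dimensional Lebesgue differentiation theorem yields $\limsup_{t\to 0}d(f(x+t\nu),f(x))/|t|\le G_\nu(x)=\sigma_x(\nu)$ for a.e.\ $x$. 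Combined with your $\liminf$ bound, the directional metric derivative exists and equals $\sigma_x(\nu)$ a.e.\ for every $\nu$ in a countable dense set of directions; your own finite-net/equi-Lipschitz argument (the one you used for the lower bound) then upgrades this to full metric differentiability at a.e.\ $x$. Note finally that the paper itself does not prove this statement --- it cites \cite{K1994} --- so the comparison here is with the classical proofs, whose key integral step is what your proposal is missing.
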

Note that if $f$ metrically differentiable at $x$, then $\textnormal{m}\partial_{\nu} f(x) = \textnormal{md}(f, x)(\nu)$.
\subsection{Metric valued Sobolev maps}
We say that  $u:\Omega\to \mathbb R$ belongs to $W^{1,p}(\Omega)$
if $u\in L^p(\Omega)$ and has weak derivatives $\partial_j u \in L^{p}(\Omega)$ for $j=1,2\dots, n$.
The following definition of metric Sobolev space has been given by Reshetnyak in \cite{Reshetnyak97}.
\begin{defn}
\label{definition:WX-space}
The space $W^{1,p}(\Omega; X)$ consists of those $f\in L^p(\Omega; X)$ such that
\begin{enumerate}[(A)]
\item for every $\varphi \in \textnormal{Lip}_{z_0}(X)$  one has $\varphi \circ f \in W^{1,p}(\Omega)$, and
\item there is a function $g \in L^p(\Omega)$ such that for every $\varphi \in \textnormal{Lip}_{z_0}(X)$ one has
\[ |\nabla (\varphi\circ f)(x)| \leq \operatorname{Lip}(\varphi)\cdot g(x) \quad \textnormal{for a.e.\ }x\in \Omega.\]
\end{enumerate}
A function $g$ as in (B) is called a Reshetnyak upper gradient of $f$.
\end{defn}

The following characterization of Sobolev maps by absolute continuity on lines was essentially proved in \cite[Lemma 2.13]{HT:08}
(see also \cite[Lemma 3.3]{CE:2021}, \cite[Theorem 3.1]{CJP22}.)

\begin{prop}
\label{lem:lemmaAC}
Let $f\in W^{1,p}(\Omega; X)$. Then for any $\nu\in\mathbb R^n\setminus\{0\}$
$f$ has a representative $\widetilde{f}$ that is absolutely continuous on almost every line segment parallel to $\nu$. 
Moreover, for every Reshetnyak upper gradient $g$ of $f$ one has
\begin{equation}\label{eq:lemmaACest}
\textnormal{m}\partial_{\nu} \widetilde{f}(x)\leq g(x)
\end{equation}
for almost every $x\in \Omega$.
\end{prop}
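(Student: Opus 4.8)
The plan is to deduce the metric statement from the classical theorem that a scalar $W^{1,p}$ function admits a representative which is absolutely continuous on almost every line parallel to a fixed direction, and then to use the Reshetnyak upper gradient to upgrade this componentwise absolute continuity to absolute continuity of $f$ in the metric sense. First I would fix a countable set $\{x_k\}_{k\in\mathbb N}$ dense in the essential range of $f$ and introduce the $1$-Lipschitz functions $\varphi_k := d(\cdot,x_k) - d(z_0,x_k)\in\textnormal{Lip}_{z_0}(X)$. By density and the triangle inequality one obtains, for almost every $x$ and every $y$,
\[
d(f(x),f(y)) = \sup_{k}\bigl|\varphi_k(f(x)) - \varphi_k(f(y))\bigr|,
\]
so that the distance is recovered from the countably many scalar functions $\varphi_k\circ f\in W^{1,p}(\Omega)$. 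For each $k$ the classical characterization provides a representative $u_k$ that is absolutely continuous on almost every line parallel to $\nu$, whose derivative along the line equals $\nabla(\varphi_k\circ f)\cdot\nu$, while condition (B) of Definition \ref{definition:WX-space} yields the bound $|\nabla(\varphi_k\circ f)|\le\textnormal{Lip}(\varphi_k)\,g\le g$.

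Next I would select, via Fubini together with the countability of the family, a full-measure set of lines $\gamma(t)=x_0+t\nu$ on which simultaneously: every $u_k\circ\gamma$ is absolutely continuous, $g\circ\gamma$ is locally integrable, and $u_k(\gamma(t))=\varphi_k(f(\gamma(t)))$ for all $k$ and almost every $t$. On such a line the fundamental theorem of calculus and the upper-gradient bound give, uniformly in $k$ (after normalizing $|\nu|=1$), the estimate $|u_k(\gamma(t))-u_k(\gamma(s))|\le \bigl|\int_s^t g(\gamma(r))\,\mathrm{d}r\bigr|$. The crucial point is that this modulus of continuity, encoded by the absolutely continuous function $G(t):=\int_{t_0}^t g(\gamma(r))\,\mathrm{d}r$, is independent of $k$.

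Taking the supremum over $k$ and invoking the distance formula, the map $t\mapsto f(\gamma(t))$, defined on the full-measure set of good parameters, satisfies $d(f(\gamma(t)),f(\gamma(s)))\le|G(t)-G(s)|$; since $X$ is complete, it extends uniquely to an absolutely continuous curve $\widetilde{f}_\ell$ on the whole segment. Setting $\widetilde{f}:=\widetilde{f}_\ell$ on good lines and $\widetilde{f}:=f$ elsewhere produces, by a further application of Fubini, a measurable representative that equals $f$ almost everywhere and is absolutely continuous on almost every line parallel to $\nu$. The estimate \eqref{eq:lemmaACest} then follows from $d(\widetilde{f}(x+h\nu),\widetilde{f}(x))\le|G(t+h)-G(t)|$ by dividing by $|h|$ and letting $h\to0$ at Lebesgue points of $g\circ\gamma$.

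I expect the main obstacle to be precisely the passage from the absolute continuity of each coordinate $\varphi_k\circ f$ to the absolute continuity of $f$ itself: componentwise absolute continuity is far too weak in a general, possibly non-separable, target, and it is only the existence of a \emph{single} Reshetnyak upper gradient $g$ dominating all the $\nabla(\varphi_k\circ f)$ that supplies the $k$-uniform modulus of continuity needed both to pass to the supremum and to invoke completeness. The remaining ingredients—the scalar theorem on absolute continuity along lines, the Fubini bookkeeping used to select good lines and good parameters, and the extension of a uniformly continuous map by completeness—are routine.
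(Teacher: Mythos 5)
Your proposal is correct and follows essentially the same route as the proofs cited by the paper for this proposition and as the paper's own parallel argument in Theorem \ref{theorem:lip-derivative}: an essential gauge sequence $\varphi_k = d(\cdot,x_k) - d(z_0,x_k)$, the classical ACL property of the scalar compositions $\varphi_k\circ f$, the single Reshetnyak upper gradient $g$ providing the $k$-uniform modulus, Fubini to select good lines, and completeness of $X$ to reconstruct the representative. The only point you leave implicit is that the limit defining $\textnormal{m}\partial_{\nu}\widetilde{f}(x)$ actually exists almost everywhere, which is exactly Lemma \ref{lemma2.4} applied to $\widetilde{f}$; your Lebesgue-point argument then bounds it by $g(x)$.
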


\section{Weak weak* derivatives}\label{sec2}
This section studies mappings that are absolutely continuous on lines.
Our primary goal is to establish a rigorous connection between the metric directional derivatives $\textnormal{m}\partial_\nu f$ and derivatives $\partial^\circ_\nu f$, which will be introduced in the following lemma.
These derivatives are referred to as weak weak* derivatives, although they are in fact a particular case of a more general notion defined in Definition \ref{def:w**}.

In the proof of the subsequent lemma, we use the concept of the Banach limit.
 The Banach limit is a continuous linear functional 
$\phi:\ell^{\infty}\to\mathbb R$, which posses the following properties: 
If $a = (a_n)_{n\in\mathbb N}\in \ell^\infty$ and the limit $\lim_{n\to\infty}a_n$ exists
then $\phi(a) =\lim_{n\to\infty}a_n$, 
and if $a_n\geq 0$   for all $n\in\mathbb N$ then  $\phi(a)\geq 0$.
The existence of this functional is guaranteed by the Hahn--Banach theorem (see \cite[Section 7]{Conway1990}).  
We denote the Banach limit by $\phi\text{\,-}\lim\limits_{n\to\infty} a_n$.

\begin{lem}\label{lemma:lip-derivative}
Let $\nu \in \mathbb R^{n}\setminus\{0\}$. Let $f:\Omega\to X$ and for $x\in\Omega$ there exist $C>0$ such that for any $\varphi\in \textnormal{Lip}_{z_0}(X)$
\begin{equation}\label{eq:ww-diff-point}
\limsup_{h\to 0} \frac{|\varphi\circ f(x + h\nu) - \varphi\circ f(x) |}{|h|} \leq C \operatorname{Lip}(\varphi).
\end{equation}
Then there exists a continuous linear functional $\partial^\circ_\nu f(x)\in\left(\textnormal{Lip}_{z_0}(X)\right)^*$
such that
for $\varphi\in \textnormal{Lip}_{z_0}(X)$
\begin{equation*}
\langle \varphi, \partial^\circ_\nu f(x) \rangle = \frac{\partial\varphi\circ f}{\partial\nu}(x)
\end{equation*}
whenever the directional derivative in the right-hand side exists;
and
\[
\| \partial^\circ_\nu f(x) \|_{\left(\textnormal{Lip}_{z_0}(X)\right)^*} \leq C.
\]
In particular, if $f$ has directional metric derivative at $x$ then 
$$
\| \partial^\circ_\nu f(x) \|_{\left(\textnormal{Lip}_{z_0}(X)\right)^*} \leq  \textnormal{m}\partial_\nu f(x).
$$
\end{lem}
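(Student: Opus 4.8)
The plan is to construct the functional $\partial^\circ_\nu f(x)$ directly by means of the Banach limit, which sidesteps the fact that the directional derivatives $\frac{\partial\varphi\circ f}{\partial\nu}(x)$ need not exist for every $\varphi$. First I would fix once and for all a single sequence $h_n\to 0$ (say $h_n = 1/n$) and, for each $\varphi\in\textnormal{Lip}_{z_0}(X)$, form the difference quotient sequence
\[
a_n(\varphi) := \frac{\varphi\circ f(x+h_n\nu) - \varphi\circ f(x)}{h_n}.
\]
The hypothesis \eqref{eq:ww-diff-point} gives $\limsup_{h\to 0}\frac{|\varphi\circ f(x+h\nu) - \varphi\circ f(x)|}{|h|}\le C\operatorname{Lip}(\varphi)<\infty$, so $(a_n(\varphi))_n$ has finite limsup of absolute values and is therefore bounded; that is, $(a_n(\varphi))_n\in\ell^\infty$ for each fixed $\varphi$. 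I would then set
\[
\langle \varphi, \partial^\circ_\nu f(x)\rangle := \phi\text{\,-}\lim_{n\to\infty} a_n(\varphi).
\]

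The three required properties then follow from the defining properties of $\phi\text{\,-}\lim$. Linearity of $\varphi\mapsto\langle\varphi,\partial^\circ_\nu f(x)\rangle$ is exactly where using one sequence $h_n$ simultaneously for all $\varphi$ is essential: since $\varphi\mapsto a_n(\varphi)$ is linear for each fixed $n$ and $\phi\text{\,-}\lim$ is a linear functional on $\ell^\infty$, the composition is linear. For the norm bound I would use that the Banach limit is positive and agrees with ordinary limits, so that $\phi\text{\,-}\lim_n b_n\le\limsup_n b_n$ for every bounded sequence (modify the finitely many terms exceeding $\limsup + \varepsilon$, which leaves $\phi\text{\,-}\lim$ unchanged since finitely supported sequences converge to $0$, then apply positivity and let $\varepsilon\to 0$). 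This yields
\[
|\langle\varphi,\partial^\circ_\nu f(x)\rangle| \le \phi\text{\,-}\lim_{n\to\infty} |a_n(\varphi)| \le \limsup_{n\to\infty}|a_n(\varphi)| \le \limsup_{h\to 0}\frac{|\varphi\circ f(x+h\nu) - \varphi\circ f(x)|}{|h|}\le C\operatorname{Lip}(\varphi),
\]
giving continuity of $\partial^\circ_\nu f(x)$ together with $\|\partial^\circ_\nu f(x)\|_{\left(\textnormal{Lip}_{z_0}(X)\right)^*}\le C$. Finally, whenever $\frac{\partial\varphi\circ f}{\partial\nu}(x)$ exists, the sequence $(a_n(\varphi))_n$ converges to it, and since the Banach limit reproduces the genuine limit whenever the latter exists, we obtain $\langle\varphi,\partial^\circ_\nu f(x)\rangle = \frac{\partial\varphi\circ f}{\partial\nu}(x)$, as required.

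The ``in particular'' statement reduces to checking that the hypothesis holds with $C = \textnormal{m}\partial_\nu f(x)$. This is immediate from the pointwise Lipschitz estimate $|\varphi\circ f(x+h\nu) - \varphi\circ f(x)|\le \operatorname{Lip}(\varphi)\, d(f(x+h\nu), f(x))$: dividing by $|h|$ and passing to the limsup gives $\limsup_{h\to 0}\frac{|\varphi\circ f(x+h\nu) - \varphi\circ f(x)|}{|h|}\le \operatorname{Lip}(\varphi)\,\textnormal{m}\partial_\nu f(x)$, so the construction above applies with $C = \textnormal{m}\partial_\nu f(x)$.

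I expect the only genuinely delicate point to be conceptual rather than computational. A naive attempt to define the functional as the directional derivative fails because that derivative need not exist for all $\varphi$; restricting to those $\varphi$ for which it does exist would neither furnish a functional on the whole space nor, a priori, a linear one. The Banach limit is precisely the tool that produces a globally defined, linear, and appropriately bounded functional while remaining consistent with the classical directional derivative wherever it is defined. Once this construction is fixed, each of the verifications above is a routine application of the positivity and limit-consistency of $\phi\text{\,-}\lim$.
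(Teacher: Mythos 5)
Your proposal is correct and takes essentially the same approach as the paper: the paper's proof also fixes a sequence of positive reals $\delta_n\to 0$, defines $\partial^\circ_\nu f(x)$ as the Banach limit of the difference quotients $\frac{\varphi(f(x+\delta_n\nu))-\varphi(f(x))}{\delta_n}$, and deduces linearity from the linearity of the Banach limit and the norm bound from \eqref{eq:ww-diff-point}. Your write-up merely makes explicit some details the paper leaves implicit, namely the inequality $\phi\text{\,-}\lim_n b_n \leq \limsup_n b_n$, the consistency with genuine directional derivatives, and the reduction of the ``in particular'' claim to the Lipschitz estimate $|\varphi\circ f(x+h\nu)-\varphi\circ f(x)|\leq \operatorname{Lip}(\varphi)\,d(f(x+h\nu),f(x))$.
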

\begin{proof}
Fix a sequence $(\delta_n)_{n\in\mathbb N}$ of positive reals such that $\delta_n\to 0$ as $n\to \infty$. 
Define $\partial^\circ_\nu f(x)\colon \textnormal{Lip}_{x_0}(X)\to \mathbb{R}$ by setting
    \begin{equation}
    \langle \varphi, \partial^\circ_\nu f(x) \rangle := 
\phi\text{\,-}\lim\limits_{n\to\infty} \frac{\varphi(f(x+\delta_n \nu)) -\varphi(f(x))}{\delta_n}.
    \end{equation} 
By the linearity of the Banach limit,  $\partial^\circ_\nu f(x)$ is linear.
    Then, from \eqref{eq:ww-diff-point}  we obtain 
\begin{equation*}
|\langle \varphi, \partial^\circ_\nu f(x) \rangle | \leq C \operatorname{Lip}(\varphi).
\end{equation*}
\end{proof}

Let $f:\Omega\to X$ be a function satisfying \eqref{eq:ww-diff-point} of the lemma, and $\psi$ be a Lipschitz mapping from $X$ to another metric space $Y$, with $\psi(z_0) = y_0$.
Then the composition $\psi\circ f$ also satisfies this assumption, and it is easy to see that for any $\varphi\in \textnormal{Lip}_{y_0}(Y)$
\begin{equation}\label{eq:comisometric}
  \langle \varphi, \partial^\circ_\nu (\psi\circ f)(x) \rangle =   \langle \varphi\circ\psi, \partial^\circ_\nu f(x) \rangle.
\end{equation}


\begin{defn}
Let $E\subset X$. 
We call $\varphi_k\in \textnormal{Lip}_{z_0}(X)$ with $\operatorname{Lip}(\varphi_{ik})=1$
an essential gauge sequence for $E$ if 
for any $x,y\in E$
\[
d(x, y) = \sup_{k} |\varphi_k(x) - \varphi_k(y)|.
\]
\end{defn}
\begin{rem}\label{rem:essgauge-sep}
If $E\subset X$ is a separable subspace, then it possesses the following essential gauge sequence.
Let $(x_k)_{k\in\mathbb N}$ be a dense sequence in $E$
then put
\[
\varphi_k(x):= d(x_k, x) - d(x_k,z_0).
\]
\end{rem}

Next we demonstrate that if the postcomposition of a map with Lipschitz functions results in functions that are absolutely continuous on lines, and if their derivatives admit an integrable majorant, then the original mapping possesses an integrable metric derivative.
\begin{thm}\label{theorem:lip-derivative}
Let $\nu\in \mathbb R^n\setminus\{0\}$.
Let $f:\Omega\to X$ be a mapping such that there exists a null set $\Sigma\subset \Omega$ with
$f(\Omega\setminus\Sigma)$ having an essential gauge sequence $(\varphi_k)_{k\in\mathbb N}$
such that $\varphi_k\circ f$ is absolutely continuous on almost every compact line segment, which is contained in $\Omega$
and parallel to $\nu$. 
Also, assume that the function
\[
x\mapsto G(x) := \sup_{k} \bigg| \frac{\partial \varphi_{k}\circ f}{\partial\nu}(x)\bigg|
\]
is locally integrable in $\Omega$.
Then mapping $f$ has a representative $\widetilde f$ which admits a metric directional derivative and
\begin{equation}\label{eq:md=G}
\textnormal{m}\partial_\nu \widetilde f(x) = G(x) \quad \text{for almost every } x\in\Omega.
\end{equation}
\end{thm}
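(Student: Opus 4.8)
The plan is to reduce everything to a one-dimensional statement along lines parallel to $\nu$ and to combine the essential gauge identity with the fundamental theorem of calculus and the Lebesgue differentiation theorem. Write points of $\Omega$ as $p+t\nu$ with $p$ ranging over a hyperplane transversal to $\nu$ and $t$ over the corresponding open set of reals. By Fubini's theorem, for almost every such line $l$ we have simultaneously: every $\varphi_k\circ f$ is absolutely continuous on $l\cap\Omega$ (the set of lines where this fails for some $k$ is a countable union of null sets, hence null); the restriction $t\mapsto G(p+t\nu)$ is locally integrable and almost every $t$ is one of its Lebesgue points; and $l$ meets $\Sigma$ in a one-dimensional null set. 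Fix such a good line and set $\gamma(t):=f(p+t\nu)$. The first step is the integral estimate
\[
d(\gamma(s),\gamma(t)) \;=\; \sup_k\Big|\int_t^s (\varphi_k\circ\gamma)'(\tau)\,d\tau\Big| \;\le\; \Big|\int_t^s G(p+\tau\nu)\,d\tau\Big|,
\]
valid whenever $p+s\nu,\,p+t\nu\notin\Sigma$: the equality is the gauge identity applied to the pair $\gamma(s),\gamma(t)\in f(\Omega\setminus\Sigma)$, and the inequality uses $|(\varphi_k\circ\gamma)'(\tau)|\le G(p+\tau\nu)$ termwise before taking the supremum.

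Next I would build the representative. Since $\Sigma$ meets $l$ in a null set, the estimate above shows that $\gamma$, restricted to the full-measure set of parameters avoiding $\Sigma$, is uniformly continuous (the right-hand side tends to $0$ as $s\to t$ by absolute continuity of the integral). As $X$ is complete, $\gamma$ extends uniquely to a continuous curve $\widetilde\gamma$ on all of $l\cap\Omega$, for which the same integral estimate persists by continuity; in particular $\widetilde\gamma$ is metrically absolutely continuous. Defining $\widetilde f(p+t\nu):=\widetilde\gamma(t)$ on good lines and $\widetilde f:=f$ elsewhere produces a representative, since $\widetilde f=f$ off the null set consisting of $\Sigma$ together with the union of the bad lines. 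The map $\widetilde f$ is absolutely continuous on almost every line parallel to $\nu$, so that Lemma \ref{lemma2.4} applies and yields measurability of $\textnormal{m}\partial_\nu\widetilde f$ once it is shown to exist.

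It then remains to compute the limit defining $\textnormal{m}\partial_\nu\widetilde f$ at $x=p+t_0\nu$ for a good line and almost every $t_0$. For the upper bound I would invoke the persisting integral estimate at a Lebesgue point $t_0$ of $t\mapsto G(p+t\nu)$:
\[
\limsup_{h\to0}\frac{d(\widetilde\gamma(t_0+h),\widetilde\gamma(t_0))}{|h|} \;\le\; \lim_{h\to0}\frac1{|h|}\Big|\int_{t_0}^{t_0+h} G(p+\tau\nu)\,d\tau\Big| \;=\; G(x).
\]
For the lower bound, note that $\varphi_k\circ\widetilde\gamma$ and $\varphi_k\circ\gamma$ are absolutely continuous functions agreeing almost everywhere, hence everywhere, so $(\varphi_k\circ\widetilde\gamma)'(t_0)=\frac{\partial(\varphi_k\circ f)}{\partial\nu}(x)$ for almost every $t_0$ and, by countability, simultaneously for all $k$. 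Using $\operatorname{Lip}(\varphi_k)=1$,
\[
\liminf_{h\to0}\frac{d(\widetilde\gamma(t_0+h),\widetilde\gamma(t_0))}{|h|} \;\ge\; \sup_k\big|(\varphi_k\circ\widetilde\gamma)'(t_0)\big| \;=\; \sup_k\Big|\tfrac{\partial(\varphi_k\circ f)}{\partial\nu}(x)\Big| \;=\; G(x).
\]
Combining the two bounds shows the limit exists and equals $G(x)$ for almost every $t_0$ on almost every line, whence $\textnormal{m}\partial_\nu\widetilde f=G$ almost everywhere in $\Omega$ by Fubini, proving \eqref{eq:md=G}.

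I expect the genuine difficulty to lie not in either of the two final inequalities, which are standard once the setup is in place, but in the construction of the representative, that is, in passing from the almost-everywhere gauge identity to a curve defined and absolutely continuous on the entire line. This is where the completeness of $X$ and careful Fubini bookkeeping of the three competing null sets, namely the bad lines for the $\varphi_k$, the non-Lebesgue points of $G$, and the slices of $\Sigma$, are essential; the interchange of the supremum over $k$ with the limit in $h$ is then resolved precisely by splitting it into the termwise lower bound and the integral upper bound.
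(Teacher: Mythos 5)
Your proof is correct and follows essentially the same route as the paper's: a Fubini reduction to good lines, the gauge identity combined with the fundamental theorem of calculus for the upper bound at Lebesgue points of $G$, the termwise $1$-Lipschitz estimate for the lower bound, and the construction of $\widetilde f$ by uniquely extending $f$ from the complement of $\Sigma$ to an absolutely continuous curve on each good line (the paper phrases this as taking the unique absolutely continuous $\mathcal{H}^1$-representative, you argue via completeness and uniform continuity, which is the same mechanism spelled out in more detail). The only quibble is your appeal to Lemma \ref{lemma2.4} for measurability, which is superfluous and formally unavailable here ($f$ is not assumed measurable or essentially separably valued in this theorem); measurability of $\textnormal{m}\partial_\nu \widetilde f$ is anyway immediate from the a.e.\ identity with the locally integrable function $G$.
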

\begin{proof}
For almost every line segment
$l:[a,b]\to\Omega$ that is parallel to $\nu$ one has:
\begin{enumerate}[(i)]
\item  \label{item:i} $G$ is integrable over $l$;
\item  \label{item:ii} $\mathcal{H}^1(l\cap\Sigma) = 0$;
\item \label{item:iii} for every $k \in\mathbb N$ and every $a\leq s\leq t\leq b$
\begin{equation}\label{eq:lemma2.13-condition-c}
|\varphi_{k}\circ f(l(t)) - \varphi_{k}\circ f(l(s))| \leq \int_s^tG(l(\tau)) \ \textrm{d}\tau.
\end{equation}
\end{enumerate}    
The Fubini theorem ensures (\ref{item:i}) and (\ref{item:ii}), while (\ref{item:iii}) follows from absolutely continuity and the defenition of function $G$.
Let $l\colon [a,b] \to \Omega$ be a line segment parallel to $\nu$ for which the properties (\ref{item:i}), (\ref{item:ii}) and (\ref{item:iii}) are satisfied. 
For given $s,t\in l^{-1}(\Omega \setminus \Sigma)$ with $s\leq t$
we have
\begin{equation}\label{eq:R*abs}
d(f(l(t)),f(l(s))) = \sup_k | \varphi_{k}\circ f(l(t)) - \varphi_{k}\circ f(l(s)) | \leq \int_s^tG(l(\tau)) \ \textrm{d}\tau.
\end{equation}
In particular, by properties (\ref{item:i}) and (\ref{item:ii}), and inequality \eqref{eq:R*abs} the restriction of $f$ to $l$ has a unique $\mathcal{H}^1$-representative that is absolutely continuous. The uniqueness implies that these representatives coincide where different line segments overlap. 
Hence, we conclude that $f$ has a representative $\widetilde{f}$ that is absolutely continuous on every compact line segment $l$ that satisfies the properties (\ref{item:i}), (\ref{item:ii}) and (\ref{item:iii}). 
By \eqref{eq:R*abs}  one has 
\begin{equation}\label{eq:limsup}
\limsup_{h\to 0} \frac{d(\widetilde f(x+h\nu),\widetilde f(x))}{h} \leq G(x) 
\quad \text{ for almost every } x\in\Omega.
\end{equation}
Furthermore, $\frac{\partial \varphi_{k}\circ \widetilde f}{\partial\nu}(x) = \frac{\partial \varphi_{k}\circ f}{\partial\nu}(x)$ almost everywhere in $\Omega$.
Then, from inequality $|\varphi_k(\widetilde f(x+h\nu)) - \varphi_k(\widetilde f(x))| \leq d(\widetilde f(x+h\nu),\widetilde f(x))$, for almost every $x\in\Omega$ we have
\begin{equation}\label{eq:liminf}
G(x) \leq \liminf_{h\to 0} \frac{d(\widetilde f(x+h\nu),\widetilde f(x))}{|h|}.
\end{equation}
Therefore, due to \eqref{eq:limsup} and \eqref{eq:liminf}, the metric directional derivative $\textnormal{m}\partial_\nu \widetilde f(x)$ exists almost everywhere and \eqref{eq:md=G} holds. 
\end{proof}

In the case where the mapping is absolutely continuous on lines, it admits a metric directional derivative, as established in Lemma \ref{lemma2.4}. In the following theorem, we show that the metric derivative can be represented as the norm of the derivative defined in Lemma \ref{lemma:lip-derivative}.

\begin{thm}\label{theorem:lip-derivative2}
Let $\nu\in \mathbb R^{n}\setminus\{0\}$ and $f:\Omega\to X$ be a measurable and essentially separably valued function.
Suppose that $f$ is absolutely continuous on almost every compact line segment, which is contained in $\Omega$
and parallel to $\nu$. 
Also, assume that metric directional derivative $\textnormal{m}\partial_\nu f\in L^1_{\operatorname{loc}}(\Omega)$.
Then
\begin{equation} 
\|\partial^\circ_\nu f(x)\|_{\left(\textnormal{Lip}_{z_0}(X)\right)^*} = \textnormal{m}\partial_\nu f(x) 
\end{equation}
for almost every  $x\in\Omega$.
\end{thm}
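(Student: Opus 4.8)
The plan is to prove the two inequalities $\|\partial^\circ_\nu f(x)\|_{(\textnormal{Lip}_{z_0}(X))^*} \le \textnormal{m}\partial_\nu f(x)$ and $\|\partial^\circ_\nu f(x)\|_{(\textnormal{Lip}_{z_0}(X))^*} \ge \textnormal{m}\partial_\nu f(x)$ separately, for almost every $x$. The first inequality is essentially free: since $f$ is absolutely continuous on almost every line parallel to $\nu$, Lemma \ref{lemma2.4} guarantees that $\textnormal{m}\partial_\nu f(x)$ exists almost everywhere, and the hypothesis \eqref{eq:ww-diff-point} of Lemma \ref{lemma:lip-derivative} holds at almost every $x$ with constant $C = \textnormal{m}\partial_\nu f(x)$, because for any $\varphi$ one has $|\varphi\circ f(x+h\nu) - \varphi\circ f(x)| \le \operatorname{Lip}(\varphi)\, d(f(x+h\nu), f(x))$. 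The conclusion of Lemma \ref{lemma:lip-derivative} then directly yields $\|\partial^\circ_\nu f(x)\|_{(\textnormal{Lip}_{z_0}(X))^*} \le \textnormal{m}\partial_\nu f(x)$ at almost every $x$.

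The reverse inequality is the substantive direction and is where I expect the main obstacle. The natural strategy is to produce, at almost every $x$, a test function $\varphi \in \textnormal{Lip}_{z_0}(X)$ with $\operatorname{Lip}(\varphi) \le 1$ for which $\langle \varphi, \partial^\circ_\nu f(x)\rangle$ is close to (or equal to) $\textnormal{m}\partial_\nu f(x)$, which by the duality norm forces $\|\partial^\circ_\nu f(x)\|_{(\textnormal{Lip}_{z_0}(X))^*}$ to be at least that large. Since $f$ is essentially separably valued, I would fix a null set $\Sigma$ such that $f(\Omega\setminus\Sigma)$ is separable and invoke Remark \ref{rem:essgauge-sep} to obtain an essential gauge sequence $(\varphi_k)$ with $\operatorname{Lip}(\varphi_k) = 1$ and $d(y,w) = \sup_k |\varphi_k(y) - \varphi_k(w)|$ for all $y,w$ in that separable set. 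The difficulty is that the index $k$ achieving the supremum in $d(f(x+h\nu), f(x)) = \sup_k |\varphi_k(f(x+h\nu)) - \varphi_k(f(x))|$ may depend on $h$, so no single fixed $\varphi_k$ need capture the metric derivative in the limit $h\to 0$; one cannot simply exchange $\sup_k$ with $\lim_{h\to 0}$.

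To circumvent this, the key observation is that Theorem \ref{theorem:lip-derivative} already gives a representative $\widetilde f$ with $\textnormal{m}\partial_\nu \widetilde f(x) = G(x) = \sup_k \bigl|\tfrac{\partial\,\varphi_k\circ f}{\partial\nu}(x)\bigr|$ almost everywhere, once we verify its hypotheses (absolute continuity of each $\varphi_k\circ f$ on almost every line and local integrability of $G$, the latter following from $G(x) \le \textnormal{m}\partial_\nu f(x) \in L^1_{\operatorname{loc}}$ via the same $1$-Lipschitz estimate). Granting this, it suffices to show $\|\partial^\circ_\nu f(x)\|_{(\textnormal{Lip}_{z_0}(X))^*} \ge G(x)$ for almost every $x$. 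For each fixed $k$, at almost every $x$ the classical directional derivative $\tfrac{\partial\,\varphi_k\circ f}{\partial\nu}(x)$ exists, so by the defining property of $\partial^\circ_\nu f$ in Lemma \ref{lemma:lip-derivative} we have $\langle \varphi_k, \partial^\circ_\nu f(x)\rangle = \tfrac{\partial\,\varphi_k\circ f}{\partial\nu}(x)$; since $\operatorname{Lip}(\varphi_k) = 1$, this gives $\|\partial^\circ_\nu f(x)\|_{(\textnormal{Lip}_{z_0}(X))^*} \ge \bigl|\tfrac{\partial\,\varphi_k\circ f}{\partial\nu}(x)\bigr|$.

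Finally I would take the supremum over $k$. Because the gauge sequence is countable, the exceptional null sets for each $k$ (where $\varphi_k\circ f$ fails to be differentiable along $\nu$, or where $\langle\varphi_k,\partial^\circ_\nu f\rangle \ne \tfrac{\partial\,\varphi_k\circ f}{\partial\nu}$) form a countable union, hence still a null set; off this set the inequality $\|\partial^\circ_\nu f(x)\|_{(\textnormal{Lip}_{z_0}(X))^*} \ge \bigl|\tfrac{\partial\,\varphi_k\circ f}{\partial\nu}(x)\bigr|$ holds simultaneously for all $k$, and taking the supremum yields $\|\partial^\circ_\nu f(x)\|_{(\textnormal{Lip}_{z_0}(X))^*} \ge G(x) = \textnormal{m}\partial_\nu f(x)$ for almost every $x$. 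Combined with the first inequality, this gives equality almost everywhere. The essential point making this work, and the reason the countable gauge sequence is the right tool, is that passing to the supremum over a fixed countable family commutes with discarding null sets, whereas trying to use the supremum directly inside the limit defining the metric derivative does not.
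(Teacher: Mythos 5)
Your proposal is correct and follows essentially the same route as the paper's proof: the upper bound via Lemma \ref{lemma2.4} and Lemma \ref{lemma:lip-derivative}, and the lower bound by combining the essential gauge sequence of Remark \ref{rem:essgauge-sep} with Theorem \ref{theorem:lip-derivative} (giving $\textnormal{m}\partial_\nu f = G$ a.e.) and the identity $\langle \varphi_k, \partial^\circ_\nu f(x)\rangle = \frac{\partial\,\varphi_k\circ f}{\partial\nu}(x)$ from Lemma \ref{lemma:lip-derivative}, then taking the supremum over the countable family outside a common null set. Your write-up is in fact slightly more careful than the paper's, since you explicitly verify the hypotheses of Theorem \ref{theorem:lip-derivative} and note why the countability of the gauge sequence is what permits the exchange of supremum and null-set removal.
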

Particulary, this resolves the question from \cite[Subsection 3.4]{CE:23}
about the measurability of the norm of weak weak* derivatives.
\begin{proof}
By Lemma \ref{lemma2.4} and Lemma \ref{lemma:lip-derivative}
\begin{equation*} 
\|\partial^\circ_\nu f(x)\|_{\left(\textnormal{Lip}_{z_0}(X)\right)^*} \leq \textnormal{m}\partial_\nu f(x) 
\quad \text{ for almost every } x\in\Omega.
\end{equation*}

Let $\Sigma$ be a null-set with $f(\Omega\setminus\Sigma)$ be separable, and
 let $(\varphi_k)_{k\in\mathbb N}$ be an essential gauge sequence for $f(\Omega\setminus\Sigma)$, see Remark \ref{rem:essgauge-sep}. 
 Then due to Theorem \ref{theorem:lip-derivative} and Lemma \ref{lemma:lip-derivative}
\[
\textnormal{m}\partial_\nu f(x) \leq \sup_{k} \bigg| \frac{\partial \varphi_{k}\circ f}{\partial\nu}(x)\bigg|
= \sup_{k} |\langle \varphi_k, \partial^\circ_\nu f(x) \rangle |
\leq \|\partial^\circ_\nu f(x)\|_{\left(\textnormal{Lip}_{z_0}(X)\right)^*}
\]
for almost every  $x\in\Omega$.
Thus, the theorem follows.
\end{proof}

Let $Y$ be another metric space and $F:X\to Y$ be a Lipschitz mapping.
Then, for any $f:\Omega\to X$ as in Theorem \ref{theorem:lip-derivative2}, the composition $F\circ f$ has the same properties.
Therefore 
\begin{equation}\label{eq:der-comp}
\|\partial^\circ_\nu F\circ f(x)\|_{\left(\textnormal{Lip}_{y_0}(Y)\right)^*}
\leq \operatorname{Lip}(F)\|\partial^\circ_\nu f(x)\|_{\left(\textnormal{Lip}_{z_0}(X)\right)^*}.
\end{equation}

\subsection{Weak weak* derivatives and Sobolev spaces}
The characterization of metric valued Sobolev maps in terms of weak weak* derivatives
was done in \cite{CE:23}, \cite{CE:2021}, and \cite{CJP22}. 
Here, we revisit this idea and extend it to a slightly more general setting.
\begin{defn}\label{def:w**}
Let $f\in L^1_{\operatorname{loc}}(\Omega; X)$.
Function $\partial_j f:\Omega\to \left(\textnormal{Lip}_{z_0}(X)\right)^*$ is a $j$-th partial weak weak* derivative of $f$ whenever 
for any $\varphi \in \textnormal{Lip}_{z_0}(X)$ function $x\mapsto \langle \varphi, \partial_j f(x)\rangle$ belongs to $L^1_{\operatorname{loc}}(\Omega)$
and
\begin{equation}
\label{eq:def-w**}
\int_\Omega\frac{\partial \phi}{\partial x_j}(x)\cdot \varphi\circ f(x)\ \textnormal{d}x 
= -\int_\Omega\phi(x)\cdot \langle \varphi, \partial_j f(x) \rangle \ \textnormal{d}x \quad \textnormal{ for every } \phi \in C^\infty_0(\Omega).
\end{equation}
\end{defn}
It turned out that weak weak* derivatives are not unique up to the choice of a representative but instead only up to the choice of a weak* representative
\cite[Lemma 3.4]{CE:23}.
We cannot even guarantee that the norm of a given weak weak* derivative  $\|\partial_j f(\cdot) \|_{\left(\textnormal{Lip}_{z_0}(X)\right)^*}$ is measurable.
The following theorem, however, shows that it is possible to pick a good representative for weak weak* derivative. 
\begin{thm}\label{theorem:Sobolev-space}
Function $f\in W^{1,p}(\Omega;X)$ if and only if 
$f\in L^p(\Omega; X)$ and has a weak weak* partial derivatives $\partial_jf:\Omega\to \left(\textnormal{Lip}_{z_0}(X)\right)^*$
such that $\|\partial_j f(\cdot) \|_{\left(\textnormal{Lip}_{z_0}(X)\right)^*} \in L^p(\Omega)$ for $j=1,\dots n$.
\end{thm}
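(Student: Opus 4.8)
The plan is to prove the equivalence by reducing, for each fixed $\varphi\in\textnormal{Lip}_{z_0}(X)$, both conditions to a single scalar statement: that $\varphi\circ f\in W^{1,p}(\Omega)$ with weak partial derivatives given by the pairing $x\mapsto\langle\varphi,\partial_j f(x)\rangle$. Once this identification is in place, conditions (A) and (B) of Definition \ref{definition:WX-space} translate directly into the two integrability requirements on the weak weak* derivatives, and \eqref{eq:def-w**} is precisely the integration-by-parts formula witnessing the weak derivative.

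I would first dispatch the direction ($\Leftarrow$), which is the softer one. Assume $f\in L^p(\Omega;X)$ admits weak weak* derivatives $\partial_j f$ with $\|\partial_j f(\cdot)\|_{\left(\textnormal{Lip}_{z_0}(X)\right)^*}\in L^p(\Omega)$, and fix $\varphi$. Since $\varphi(z_0)=0$ we have $|\varphi\circ f(x)|\le\operatorname{Lip}(\varphi)\,d(f(x),z_0)$, so $\varphi\circ f\in L^p(\Omega)$ because $d(f(\cdot),z_0)\in L^p(\Omega)$. The defining identity \eqref{eq:def-w**} states exactly that $\varphi\circ f$ has distributional $j$-th partial derivative equal to $\langle\varphi,\partial_j f(\cdot)\rangle$; the bound $|\langle\varphi,\partial_j f(x)\rangle|\le\operatorname{Lip}(\varphi)\|\partial_j f(x)\|_{\left(\textnormal{Lip}_{z_0}(X)\right)^*}$ places this derivative in $L^p(\Omega)$, giving (A). For (B) I set $g(x):=\bigl(\sum_{j}\|\partial_j f(x)\|_{\left(\textnormal{Lip}_{z_0}(X)\right)^*}^2\bigr)^{1/2}\in L^p(\Omega)$, whence $|\nabla(\varphi\circ f)(x)|\le\operatorname{Lip}(\varphi)\,g(x)$ for almost every $x$ (the null set may depend on $\varphi$, as permitted by the definition of a Reshetnyak upper gradient).

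For the direction ($\Rightarrow$), let $g\in L^p(\Omega)$ be a Reshetnyak upper gradient of $f$. Fix $j$ and, using Proposition \ref{lem:lemmaAC} in the direction $e_j$, select a representative $\widetilde f$ that is absolutely continuous on almost every line segment parallel to $e_j$ and satisfies $\textnormal{m}\partial_j\widetilde f\le g$ by \eqref{eq:lemmaACest}. Since $g\in L^p(\Omega)\subset L^1_{\operatorname{loc}}(\Omega)$, Lemma \ref{lemma2.4} supplies the metric directional derivative almost everywhere, and Theorem \ref{theorem:lip-derivative2} gives $\|\partial^\circ_j\widetilde f(\cdot)\|_{\left(\textnormal{Lip}_{z_0}(X)\right)^*}=\textnormal{m}\partial_j\widetilde f(\cdot)\le g$, so this norm is measurable and lies in $L^p(\Omega)$. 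I then define $\partial_j f:=\partial^\circ_j\widetilde f$. To verify \eqref{eq:def-w**}, fix $\varphi$: the composition $\varphi\circ\widetilde f$ is absolutely continuous on almost every line parallel to $e_j$ with line-derivative dominated by $\operatorname{Lip}(\varphi)\,g\in L^p(\Omega)$, so the absolute-continuity-on-lines characterization places $\varphi\circ\widetilde f$ in $W^{1,p}(\Omega)$ and identifies its weak $j$-th derivative with its classical directional derivative almost everywhere; by Lemma \ref{lemma:lip-derivative} this equals $\langle\varphi,\partial^\circ_j\widetilde f(\cdot)\rangle$. As $\widetilde f=f$ almost everywhere, the integration-by-parts formula for $\varphi\circ\widetilde f$ is exactly \eqref{eq:def-w**}.

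The main obstacle is the identification step in ($\Rightarrow$): one must guarantee that the pointwise, Banach-limit object $\partial^\circ_j\widetilde f$ produced by Lemma \ref{lemma:lip-derivative} genuinely represents the distributional derivative of $\varphi\circ f$ while carrying the correct $L^p$ norm. This rests on two hinges—choosing the representative $\widetilde f$ adapted to the direction $e_j$ so that on almost every line the Banach limit collapses to an honest derivative agreeing with the weak derivative, and invoking Theorem \ref{theorem:lip-derivative2} to pin the norm equal to $\textnormal{m}\partial_j\widetilde f$. Everything else—the scalar Fubini and absolute-continuity-on-lines bookkeeping—is routine once these are secured; note in particular that the representative may be chosen separately for each coordinate direction, since the weak weak* derivative property for a given $j$ depends only on the almost-everywhere behaviour of $f$.
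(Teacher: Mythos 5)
Your proposal is correct and follows essentially the same route as the paper: for ($\Rightarrow$) you use Proposition \ref{lem:lemmaAC} to pick the direction-adapted absolutely continuous representative, Lemma \ref{lemma:lip-derivative} to build $\partial^{\circ}_j\widetilde f$ as the candidate weak weak* derivative, and Theorem \ref{theorem:lip-derivative2} to secure measurability and the $L^p$ bound via $\|\partial^{\circ}_j\widetilde f\|_{\left(\textnormal{Lip}_{z_0}(X)\right)^*}=\textnormal{m}\partial_j\widetilde f\le g$; for ($\Leftarrow$) you verify (A) and (B) of Definition \ref{definition:WX-space} with the same gradient bound $g=\bigl(\sum_j\|\partial_j f\|^2_{\left(\textnormal{Lip}_{z_0}(X)\right)^*}\bigr)^{1/2}$ used in the paper. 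The only cosmetic difference is that you spell out the ACL bookkeeping (and the per-direction choice of representative) that the paper leaves implicit.
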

\begin{proof}
Let $f\in W^{1,p}(\Omega;X)$ and $g\in L^p(\Omega)$ be its Reshetnyak upper gradient.
Thanks to Proposition \ref{lem:lemmaAC}, there is a representative $\widetilde{f}$ that is absolutely continuous on almost every line segment parallel to the $j$-axis.
In particular $\frac{\partial \varphi\circ\widetilde{f}}{\partial x_j}$ is a weak derivative for $\varphi\circ f$ 
for any Lipshitz function $\varphi:X\to\mathbb R$. 
Then by Lemma \ref{lemma:lip-derivative} there exists a linear functional $\partial^\circ_j \widetilde{f}(x)$
such that for $\varphi\in \textnormal{Lip}_{z_0}(X)$
\[
\frac{\partial \varphi\circ\widetilde{f}}{\partial x_j}(x) = \langle \varphi, \partial^\circ_j \widetilde{f}(x) \rangle \quad \textnormal{for almost every } x\in\Omega.
\]
Therefore $\langle \varphi, \partial^\circ_j\widetilde{f}(x) \rangle$ is a weak derivative of $\varphi\circ f$  for every $\varphi\in \textnormal{Lip}_{z_0}(X)$ and 
\[
|\langle \varphi, \partial^\circ_j \widetilde{f}(x) \rangle | = |\partial_j (\varphi\circ f)(x) |  \leq  \operatorname{Lip}(\varphi)\cdot g(x)
\]
for almost every  $x\in\Omega$. This means that $\partial^\circ_j \widetilde{f}$ is a partial weak weak* derivative of $f$.
Moreover,  due to Theorem \ref{theorem:lip-derivative2} function 
$x\mapsto \|\partial^\circ_j \widetilde{f}(x)\|_{\left(\textnormal{Lip}_{z_0}(X)\right)^*}$
is measurable, and we have 
\[
\int_\Omega \|\partial^\circ_j \widetilde{f}(x)\|_{\left(\textnormal{Lip}_{z_0}(X)\right)^*}^p \ \textnormal{d}x
\leq \int_\Omega (g(x))^p \ \textnormal{d}x < \infty \quad \textnormal{ for } j=1,\dots n.
 \]
 
 Now suppose that $f\in L^p(\Omega; X)$ and has weak weak* partial derivatives $\partial_jf$ 
 such that $\|\partial_jf(\cdot) \|_{\left(\textnormal{Lip}_{z_0}(X)\right)^*} \in L^p(\Omega)$ for $j=1,\dots n$. 
  For any $\varphi\in \textnormal{Lip}_{z_0}(X)$ we have $|\varphi\circ f(x)| \leq \operatorname{Lip}(\varphi)d(f(x), z_0)$,
  therefore $\varphi\circ f \in L^p(\Omega)$.
  By definition of weak weak* derivatives $\langle \varphi, \partial_jf(x) \rangle$ is a weak derivative of $\varphi\circ f$,
  and it belongs to $L^p(\Omega)$ because 
\[
|\partial_j (\varphi\circ f)(x)| =  |\langle \varphi, \partial_j f(x) \rangle | \leq \operatorname{Lip}(\varphi) \|\partial_jf(x)\|_{\left(\textnormal{Lip}_{z_0}(X)\right)^*}.
\]
So $\varphi\circ f \in W^{1,p}(\Omega)$.
Finally, we have
\begin{equation}
\label{eq:second-norm}
|\nabla (\varphi\circ f)(x)|= \left( \sum_{j=1}^n  \left| \partial_j (\varphi\circ f)(x)\right|^2\right)^{1/2}
\leq  \operatorname{Lip}(\varphi)\left( \sum_{j=1}^n \|\partial_jf(x)\|^2_{\left(\textnormal{Lip}_{z_0}(X)\right)^*}\right)^{1/2}
\end{equation}
for almost every $x\in\Omega$.
Setting $g(x):=\left( \sum_{j=1}^n \|\partial_jf(x)\|^2_{\left(\textnormal{Lip}_{z_0}(X)\right)^*}\right)^{1/2}$, we conclude that $f\in W^{1,p}(\Omega;X)$.
\end{proof}

\begin{lem}\label{lemma:der-same}
Let $f_1,f_2 \in W^{1,p}(\Omega; X)$ be
such that $f_1=f_2$ almost everywhere in a measurable set $E\subset\Omega$.
Then
\[
\|\partial^\circ_j \widetilde f_1(x)\|_{\left(\textnormal{Lip}_{z_0}(X)\right)^*} = \|\partial^\circ_j \widetilde f_2(x)\|_{\left(\textnormal{Lip}_{z_0}(X)\right)^*} \quad j=1\dots n
\]
for almost every $x\in E$. Here $\widetilde f_1$ and $\widetilde f_1$  are epresentatives that are absolutely continuous on almost every line segment parallel to the $j$-axis.
\end{lem}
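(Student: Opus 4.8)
The plan is to exploit the locality of weak derivatives together with the pointwise representation established in Theorem~\ref{theorem:lip-derivative2}. The key observation is that the quantity $\|\partial^\circ_j \widetilde f(x)\|_{\left(\textnormal{Lip}_{z_0}(X)\right)^*}$ equals the metric partial derivative $\textnormal{m}\partial_j \widetilde f(x)$ almost everywhere, and the metric partial derivative is manifestly determined by the local behaviour of $f$ near $x$. Since $f_1 = f_2$ almost everywhere on $E$, after passing to the absolutely-continuous-on-lines representatives $\widetilde f_1$ and $\widetilde f_2$, these two maps should still agree almost everywhere on $E$, and one expects their one-sided difference quotients along almost every line to coincide at density points of $E$.

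First I would record that, by Theorem~\ref{theorem:Sobolev-space} applied to each $f_i$, the representative $\widetilde f_i$ is absolutely continuous on almost every line segment parallel to the $j$-axis and satisfies, by Theorem~\ref{theorem:lip-derivative2},
\begin{equation*}
\|\partial^\circ_j \widetilde f_i(x)\|_{\left(\textnormal{Lip}_{z_0}(X)\right)^*} = \textnormal{m}\partial_j \widetilde f_i(x) \quad \text{for a.e.\ } x\in\Omega,\ i=1,2.
\end{equation*}
Thus it suffices to prove $\textnormal{m}\partial_j \widetilde f_1(x) = \textnormal{m}\partial_j \widetilde f_2(x)$ for almost every $x\in E$. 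Next I would invoke Fubini's theorem on the family of lines parallel to $e_j$: for almost every such line $l$, the one-dimensional trace $E\cap l$ has the property that $\mathcal H^1$-almost every point of $E\cap l$ is a one-dimensional density point of $E\cap l$, and moreover $\widetilde f_1\circ l = \widetilde f_2\circ l$ holds $\mathcal H^1$-almost everywhere on $l^{-1}(E)$ while both traces are absolutely continuous.

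The core of the argument is then a one-dimensional statement: if two absolutely continuous curves $\gamma_1,\gamma_2:[a,b]\to X$ agree $\mathcal H^1$-almost everywhere on a set $A\subset[a,b]$, then their metric speeds $|\dot\gamma_1|$ and $|\dot\gamma_2|$ coincide at almost every point of $A$. I would prove this by testing against the essential gauge sequence $(\varphi_k)$ for a separable set containing both images: each $\varphi_k\circ\gamma_i$ is absolutely continuous, $\varphi_k\circ\gamma_1 = \varphi_k\circ\gamma_2$ a.e.\ on $A$, so by locality of the classical derivative $(\varphi_k\circ\gamma_1)' = (\varphi_k\circ\gamma_2)'$ a.e.\ on $A$; taking the supremum over $k$ and using the representation $\textnormal{m}\partial_j \widetilde f(x) = \sup_k |\partial_\nu(\varphi_k\circ \widetilde f)(x)|$ from the proof of Theorem~\ref{theorem:lip-derivative} recovers equality of the metric speeds on $A$. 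Transporting this back through the Fubini decomposition yields the claim on $E$.

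The main obstacle will be the careful handling of the gauge sequence and the density-point bookkeeping across the Fubini slicing. Specifically, one must ensure a single essential gauge sequence can be chosen for a separable set containing $\widetilde f_1(\Omega\setminus\Sigma_1)\cup \widetilde f_2(\Omega\setminus\Sigma_2)$, and that the a.e.\ coincidence $\widetilde f_1 = \widetilde f_2$ on $E$ (not merely $f_1 = f_2$) survives after selecting the absolutely continuous representatives, which is where the locality of the classical one-dimensional derivative at density points of $A$ must be applied with care. Once the one-dimensional locality lemma is in hand, the rest is a routine assembly.
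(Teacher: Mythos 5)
Your proposal is correct and follows essentially the same route as the paper's own proof: reduce via Theorem~\ref{theorem:lip-derivative2} to equality of the metric partial derivatives, slice by Fubini along lines parallel to the $j$-axis, and use locality of the (one-dimensional) derivative on the set where the traces agree. The paper's proof is just a terse three-sentence version of this; your gauge-sequence justification of the one-dimensional locality step fills in exactly the detail the paper leaves implicit.
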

\begin{proof}
For almost every line segment $l$ parallel to $j$-th axis with $\mathcal{H}^1(l\cap E) > 0$ we have $f_1=f_2$ on $l\cap E$.
Therefore metric directional derivatives $\textnormal{m}\partial_j f_1(x)$ and $\textnormal{m}\partial_j f_2(x)$ coincide for almost every $x\in E$.
Thus, by Theorem \ref{theorem:lip-derivative2}, the lemma follows.
\end{proof}

\begin{lem}\label{lemma:SobLip}
Let $p\in[1,\infty)$, $\Omega$ be a bounded domain, and map $f \in W^{1,p}(\Omega; X)$ be such that 
\[
\sup_{x,y\in\Omega} d(f(x), f(y)) \leq R <\infty.
\]
Then for every $\varepsilon>0$ there exist a measurable set $A_{\varepsilon}\subset\Omega$
so that $|\Omega\setminus A_\varepsilon|<\varepsilon$ and
the restriction $f|_{A_\varepsilon}:A_\varepsilon\to X$ is Lipschitz continuous. 
Moreover 
\begin{equation}\label{eq:lip-measure}
\lim_{\varepsilon\to 0} \left( \operatorname{Lip}(f|_{A_\varepsilon})\right)^p|\Omega\setminus A_\varepsilon| = 0.
\end{equation}
\end{lem}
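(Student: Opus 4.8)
Lemma \ref{lemma:SobLip} — proof proposal.

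The plan is to produce the set $A_\varepsilon$ via a maximal-function truncation argument, which is the standard route to Lipschitz approximation of Sobolev maps, adapted here to the metric-valued setting through the essential gauge sequence. First I would pass to a representative $\widetilde f$ that is absolutely continuous on almost every line segment parallel to each coordinate axis; this is furnished by Proposition \ref{lem:lemmaAC}, applied to each $\nu = e_j$. Let $g\in L^p(\Omega)$ be a Reshetnyak upper gradient of $f$, so that, by Theorem \ref{theorem:Sobolev-space}, the function $g(x) = \bigl(\sum_j \|\partial^\circ_j \widetilde f(x)\|^2_{(\textnormal{Lip}_{z_0}(X))^*}\bigr)^{1/2}$ lies in $L^p(\Omega)$ and, by Proposition \ref{lem:lemmaAC}, dominates each metric partial derivative $\textnormal{m}\partial_j \widetilde f$ almost everywhere.

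The core estimate I would establish is a pointwise Lipschitz bound in terms of the Hardy--Littlewood maximal function $Mg$. Precisely, for almost every pair $x,y$ in the set $A_\lambda := \{x\in\Omega : Mg(x)\le \lambda\}$, one should have $d(\widetilde f(x),\widetilde f(y)) \le C_n\,\lambda\,|x-y|$. This follows from integrating the metric derivative along the axis-parallel segments: using the essential gauge sequence $(\varphi_k)$ from Remark \ref{rem:essgauge-sep} for the separable set $\widetilde f(\Omega\setminus\Sigma)$, one writes $d(\widetilde f(x),\widetilde f(y)) = \sup_k |\varphi_k\circ\widetilde f(x) - \varphi_k\circ\widetilde f(y)|$ and bounds each scalar difference by the line integral of $g$ along a polygonal path from $x$ to $y$, exactly as in the classical Sobolev case. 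The maximal-function control of these line integrals then yields the stated bound, so that $f|_{A_\lambda}$ is $C_n\lambda$-Lipschitz after modification on a null set. (A mild point: one must arrange the polygonal path to stay inside $\Omega$; since $\Omega$ is a bounded domain one either works locally on convex subsets and patches, or invokes the boundedness hypothesis $d(f(x),f(y))\le R$ to control the few troublesome pairs, which is precisely where that hypothesis enters.)

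Having the bound, I would set $A_\varepsilon := A_\lambda$ with $\lambda$ chosen so that $|\Omega\setminus A_\lambda|<\varepsilon$; this is possible because $Mg\in L^p$ (weak-$(1,1)$ / strong-$(p,p)$ boundedness of $M$ for $p>1$, and the weak estimate together with $g\in L^1$ for $p=1$ suffices to make the level set small). Then $\operatorname{Lip}(f|_{A_\varepsilon}) \le C_n\lambda$, and the set where $Mg>\lambda$ shrinks to null measure as $\lambda\to\infty$. The main obstacle — and the only genuinely quantitative part — is establishing the limit \eqref{eq:lip-measure}. For this I would use the weak-type bound $|\{Mg>\lambda\}| \le C\lambda^{-p}\int g^p$ is too crude; instead the sharp statement needed is
\[
\lambda^p\,|\{x : Mg(x)>\lambda\}| \longrightarrow 0 \quad\text{as }\lambda\to\infty,
\]
which holds for any $g\in L^p$ because $\lambda^p|\{Mg>\lambda\}|\le \int_{\{Mg>\lambda/2\}} (Mg)^p$ by distributional rearrangement, and the right-hand side tends to $0$ by dominated convergence since $Mg\in L^p$ and $|\{Mg>\lambda/2\}|\to 0$. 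Combining $\operatorname{Lip}(f|_{A_\varepsilon})^p\,|\Omega\setminus A_\varepsilon| \le (C_n\lambda)^p\,|\{Mg>\lambda\}|$ with this decay gives \eqref{eq:lip-measure} and completes the proof.
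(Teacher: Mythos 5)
Your overall strategy --- reduce to the scalar functions $\varphi_k\circ\widetilde f$ via a gauge sequence, run a maximal-function truncation, and deduce \eqref{eq:lip-measure} from a refined weak-type decay --- is the same as the paper's, but the step you treat as routine is exactly where the paper does its real work, and as written that step fails. The mechanism you propose for the core estimate is not valid: a one-dimensional line integral of $g\in L^p$ along a polygonal path is not controlled by $|x-y|\left(Mg(x)+Mg(y)\right)$ (it is not even defined for all pairs of points); the classical inequality $|u(x)-u(y)|\le C_n|x-y|\left(M|\nabla u|(x)+M|\nabla u|(y)\right)$ is proved by telescoping ball averages with the Poincar\'e inequality, and it is a statement about functions in $W^{1,p}(\mathbb R^n)$ (or on a ball, or an extension domain). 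Since $\Omega$ is an arbitrary bounded domain, applying it to $\varphi_k\circ\widetilde f\in W^{1,p}(\Omega)$ is illegitimate: in a slit or cusped domain two points can be Euclidean-close while every path joining them inside $\Omega$ is long, and no bound of the desired form holds there. Your parenthetical remedies do not close this gap: patching convex pieces only bounds $d(f(x),f(y))$ by $\lambda$ times the \emph{intrinsic} distance in $\Omega$, not by $|x-y|$. The paper's solution is to fix $D\Subset\Omega$ with $|\Omega\setminus D|$ small, take a cutoff $\eta\in C^\infty_0(\mathbb R^n)$ with $\eta\equiv1$ on $D$ and $\eta\equiv0$ off $\Omega$, and extend $\eta\cdot\varphi_k\circ f$ by zero to all of $\mathbb R^n$; the hypothesis $\sup_{x,y}d(f(x),f(y))\le R$ enters precisely here, to bound the extra gradient term by $R\,|\nabla\eta|$ uniformly in $k$, so that all $k$ share the single majorant $h=R\,|\nabla\eta|+\eta\,\|\nabla^\circ f\|_{\left(\textnormal{Lip}_{z_0}(X)\right)^*}\in L^p(\mathbb R^n)$ and the maximal-function machinery can legitimately be applied on $\mathbb R^n$. (Your ``troublesome pairs'' idea can be made rigorous --- chain inside balls contained in $\Omega$ when $x,y\in D$ and $|x-y|$ is small compared with $\operatorname{dist}(D,\partial\Omega)$, and use $d(f(x),f(y))\le R\le \frac{R}{\operatorname{dist}(D,\partial\Omega)}\,|x-y|$ for separated pairs --- but then this must be the argument, not an aside; it is where the hypothesis $R$ and the boundedness of $\Omega$ are actually consumed.)

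A second, smaller gap: your proof of the decay $\lambda^p\,|\{Mg>\lambda\}|\to0$ breaks at $p=1$, since you invoke dominated convergence ``because $Mg\in L^p$'', while $Mg\notin L^1(\mathbb R^n)$ unless $g\equiv0$. The statement is still true for $p=1$; the standard fix truncates $g$ rather than $Mg$: from $\{Mg>\lambda\}\subset\{M\left(g\chi_{\{g>\lambda/2\}}\right)>\lambda/2\}$ and the weak $(1,1)$ bound one gets $\lambda\,|\{Mg>\lambda\}|\le C\int_{\{g>\lambda/2\}}g\to0$. Finally, once the cutoff domain $D$ is introduced, $\Omega\setminus A_\varepsilon$ contains $\Omega\setminus D$ in addition to the level set of the maximal function, so \eqref{eq:lip-measure} also requires choosing $|\Omega\setminus D|$ small compared with $t^{-p}$ (the paper takes $|\Omega\setminus D|<\varepsilon^2$ while $t^p\sim\varepsilon^{-1}$); this piece of bookkeeping is absent from your proposal.
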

\begin{proof}
Fix $\varepsilon>0$ and a compactly embedded domain $D\subset\Omega$ such that $|\Omega\setminus D|<
\min\{\frac{\varepsilon}{2}, \varepsilon^2\}$.
Choose function $\eta\in C^{\infty}_0(\mathbb R^n)$ with $\eta(x)=1$ when $x\in D$ and $\eta(x) = 0$ when $x\in \mathbb R^n\setminus \Omega$.
Let $\{\varphi_k\}_{k\in\mathbb N}$ be an essential gauge sequence for $f(\Omega\setminus\Sigma)$, where $|\Sigma|=0$.
Then function $\eta\cdot\varphi_k\circ f$ can be extended by zero to belong $W^{1,p}(\mathbb R^n)$.
Therefore, we can apply a pointwise estimate via the maximal function, i.e. 
\[
|\varphi_k\circ f(x) - \varphi_k\circ f(y)| \leq
C|x-y|(M|\nabla (\eta\cdot\varphi_k\circ f)|(x) + M|\nabla (\eta\cdot\varphi_k\circ f)|(y))
\]
for almost every $x,y\in D$. Then we have the following estimate
\begin{align}
|\nabla( \eta\cdot\varphi_k\circ f)(x)| = | \varphi_k\circ f(x)\cdot\nabla \eta(x) + \eta(x)\cdot\nabla( \varphi_k\circ f)(x)|\\
\leq R\cdot|\nabla \eta(x)| + \eta(x) \cdot  \|\nabla^\circ f(x)\|_{\left(\textnormal{Lip}_{z_0}(X)\right)^*} =: h(x),
\end{align}
where 
\[
\|\nabla^\circ f(x)\|_{\left(\textnormal{Lip}_{z_0}(X)\right)^*} =  \left(\sum_{j=1}^n \|\partial^\circ_j f(x)\|_{\left(\textnormal{Lip}_{z_0}(X)\right)^*}^2 \right)^{\frac{1}{2}}.
\]
Note that $h\in L^p(\mathbb R^n)$. Subsequently, we have 
\[
d(f(x),f(y)) \leq C|x-y|(Mh(x) + Mh(y)) 
\]
for almost every $x,y\in D$. Let $t>0$, denote $E_t:= \{x\in \mathbb R^n \colon Mh(x) \geq t \}$, then 
\[
|E_t| \leq c\|h\|_{L^p(\mathbb R^n)} \frac{1}{t^p}.
\]
Therefore, taking
\[
t\geq \bigg(\frac{2c}{\varepsilon}\|h\|_{L^p(\mathbb R^n)} \bigg)^{\frac{1}{p}} \quad \text{ and }\quad A = D\setminus E_t
\]
we obtain that $f|_A$ is Lipschitz contnuous with $\operatorname{Lip}(f|_A)\leq 2Ct$ and $|\Omega\setminus A|\leq \varepsilon$.
Finally, \eqref{eq:lip-measure} also follows from the properties of the maximal function, see for example the proof of \cite[Theorem 4.9]{KLV2021}.
\end{proof}

Though it might be natural to expect the Sobolev space $W^{1,p}(\Omega; X)$ to be a metric space, it is somewhat problematic to impose a distance.
 One available approach employs an isometric embedding of a metric space $X$ into some Banach space. 
 However, Haj{\l}asz in \cite{H2011} esteblished that some properties of such an induced metric may drastically depend on the isometric embedding.
 On the other hand, when $X$ is a Banach space, the corresponding Sobolev space turns out to be a Banach space itself.

Now, let $V$ be a Banach space. Because $V^*\subset \textnormal{Lip}_{0}(V)$ we can define a linear functional $\partial^{**}_{\nu}f(x)\in V^{**}$
as
\[
\big\langle v^*, \partial^{**}_\nu f(x)\big\rangle_{V^*,V^{**}} 
:= \big\langle v^*, \partial^\circ_\nu f(x) \big\rangle_{\textnormal{Lip}_{0}(V), \left(\textnormal{Lip}_{0}(V)\right)^*}.
\]
If $f$ and $g$ are two functions from $\Omega$ to $V$, that statisfy \eqref{eq:ww-diff-point}. Then, by definition of $\partial^\circ_\nu$ and the linearity of the Banach limit, we have that
\[
\partial^{**}_\nu(f +g )(x) = \partial^{**}_\nu f (x) + \partial^{**}_\nu g(x).
\]
From the proof of Theorem \ref{theorem:Sobolev-space}, we conclude that $\partial^\circ_j \widetilde{f}(x)$ represents a \textit{canonical} weak weak* partial derivative.
Then, for a Sobolev mapping $f\in W^{1,p}(\Omega; V)$ we assume that $\partial^{**}_j f(x): =\partial^{**}_j \widetilde{f}(x)$.
With this in mind, we define a \textit{norm}
\[
   \|f\|_{W^{1,p}}:=\left(\int_\Omega \|f(x)\|_{V}^{p}\ \textnormal{d}x\right)^{\frac{1}{p}} + 
   \sum_{j=1}^n \left(\int_\Omega \|\partial^{**}_j f(x) \|^p_{V^{**}} \textnormal{d}x\right)^{\frac{1}{p}}.
\]

\begin{lem}\label{lemma:lipapprox}
Let $f:\Omega\to V$ be a map belonging to the Sobolev space $W^{1,p}(\Omega; V)$, $p\in [1,\infty)$.
Then for every $\varepsilon>0$ there exists a Lipschitz continuous map $F:\Omega\to V$ such that
\[
|\{x\in\Omega \colon f(x) \ne F(x)\} | < \varepsilon \quad\text{ and }\quad \|f - F \|_{W^{1,p}(\Omega; V)}<\varepsilon.
\]
\end{lem}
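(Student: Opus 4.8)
The plan is to run the classical Lusin-type (maximal-function) approximation scheme, treating the $V$-valued Lipschitz extension as the one genuinely new ingredient. After a preliminary reduction I would produce a set on which $f$ is Lipschitz, extend it to a global Lipschitz map, and estimate the error in the $W^{1,p}$-norm; the decisive quantitative input will be the decay \eqref{eq:lip-measure} furnished by Lemma \ref{lemma:SobLip}. First I would reduce to the case where $\Omega$ is a bounded domain and $f$ has bounded image. The image is truncated by composing with the radial retraction $P_R\colon V\to V$ defined by $P_R(v)=v$ for $\|v\|_V\le R$ and $P_R(v)=Rv/\|v\|_V$ otherwise, which is Lipschitz and fixes $P_R(0)=0$; using the composition estimate \eqref{eq:der-comp}, Lemma \ref{lemma:der-same} (applied to $f-P_R\circ f$ and the zero map on $\{\|f\|_V<R\}$), and absolute continuity of the integral, one checks that $|\{f\ne P_R\circ f\}|\to0$ and $\|f-P_R\circ f\|_{W^{1,p}(\Omega;V)}\to0$ as $R\to\infty$. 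The domain is handled by exhausting $\Omega$ by bounded subdomains and cutting off; since $\|f(\cdot)\|_V$ and $g:=\bigl(\sum_j\|\partial^\circ_j f\|^2_{(\operatorname{Lip}_{0}(V))^*}\bigr)^{1/2}$ lie in $L^p(\Omega)$, the tail is negligible. These reductions are routine and I would only sketch them.

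Assume now $\Omega$ bounded and $\operatorname{diam}f(\Omega)\le R<\infty$. Given $\varepsilon>0$, Lemma \ref{lemma:SobLip} supplies a measurable $A_\varepsilon\subset\Omega$ with $|\Omega\setminus A_\varepsilon|<\varepsilon$ on which $f$ is Lipschitz and, crucially, with $(\operatorname{Lip}(f|_{A_\varepsilon}))^p\,|\Omega\setminus A_\varepsilon|\to0$. I would then extend $f|_{A_\varepsilon}$ (equivalently, its continuous extension to $\overline{A_\varepsilon}$) to a globally Lipschitz map $F\colon\mathbb R^n\to V$ with $\operatorname{Lip}(F)\le c(n)\operatorname{Lip}(f|_{A_\varepsilon})$, $F=f$ on $A_\varepsilon$, and $\operatorname{diam}F(\mathbb R^n)$ comparable to $R$. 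Since $F$ is Lipschitz and bounded on the bounded set $\Omega$, Theorem \ref{theorem:Sobolev-space} gives $F\in W^{1,p}(\Omega;V)$, whence $f-F\in W^{1,p}(\Omega;V)$; moreover $\{f\ne F\}\subset\Omega\setminus A_\varepsilon$ has measure $<\varepsilon$, which is the first assertion.

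It remains to estimate $\|f-F\|_{W^{1,p}(\Omega;V)}$. For the zeroth-order term, $\int_\Omega\|f-F\|_V^p=\int_{\Omega\setminus A_\varepsilon}\|f-F\|_V^p\le 2^{p-1}\int_{\Omega\setminus A_\varepsilon}(\|f\|_V^p+\|F\|_V^p)$, and both summands tend to $0$: the first by absolute continuity of the integral (as $\|f\|_V\in L^p$ and $|\Omega\setminus A_\varepsilon|\to0$), the second because $\|F\|_V$ is bounded. For the first-order terms, applying Lemma \ref{lemma:der-same} to the pair $f-F$ and the zero map (which coincide on $A_\varepsilon$) yields $\partial^{**}_j(f-F)=0$ for almost every $x\in A_\varepsilon$, so $\int_\Omega\|\partial^{**}_j(f-F)\|_{V^{**}}^p=\int_{\Omega\setminus A_\varepsilon}\|\partial^{**}_j(f-F)\|_{V^{**}}^p$. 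Bounding $\|\partial^{**}_j(f-F)\|_{V^{**}}\le\|\partial^{**}_jf\|_{V^{**}}+\|\partial^{**}_jF\|_{V^{**}}$ and using $\|\partial^{**}_jf\|_{V^{**}}\le g\in L^p$ (absolute continuity again) together with $\|\partial^{**}_jF\|_{V^{**}}\le\|\partial^{\circ}_jF\|_{(\operatorname{Lip}_{0}(V))^*}\le\operatorname{Lip}(F)\le c(n)\operatorname{Lip}(f|_{A_\varepsilon})$ (by Lemma \ref{lemma:lip-derivative} and the fact that $V^*\subset\operatorname{Lip}_{0}(V)$ does not increase the norm), the bad-set integral is dominated by a constant times $(\operatorname{Lip}(f|_{A_\varepsilon}))^p\,|\Omega\setminus A_\varepsilon|\to0$ by \eqref{eq:lip-measure}. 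Thus every term of $\|f-F\|_{W^{1,p}(\Omega;V)}$ tends to $0$, and choosing $\varepsilon$ small enough delivers both conclusions simultaneously.

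The main obstacle is the Lipschitz extension step: unlike the scalar case, McShane's formula is unavailable for $V$-valued maps, and in general there is no Lipschitz retraction of an ambient $\ell^\infty$ onto $V$, so one cannot extend coordinatewise. Instead one must invoke a Whitney-type extension whose defining partition-of-unity averages keep the values inside the affine span of $f(\overline{A_\varepsilon})\subset V$, losing only a dimensional constant $c(n)$ (the Johnson--Lindenstrauss--Schechtman extension theorem). The two remaining delicate points, both already available, are the a.e.\ vanishing of $\partial^{**}_j(f-F)$ on $A_\varepsilon$, which is exactly what Lemma \ref{lemma:der-same} provides, and the quantitative decay \eqref{eq:lip-measure}, without which the gradient of the extension on the bad set could not be absorbed.
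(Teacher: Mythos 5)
Your proposal is correct and follows essentially the same route as the paper: radial truncation $\pi_R$, the Lusin-type set from Lemma \ref{lemma:SobLip}, the Johnson--Lindenstrauss--Schechtman extension with $\operatorname{Lip}(F)\leq c(n)\operatorname{Lip}(f|_{A_\varepsilon})$, and error estimates combining Lemma \ref{lemma:der-same}, \eqref{eq:der-comp}, absolute continuity of the integral, and the decay \eqref{eq:lip-measure}. The only cosmetic difference is that you package the truncation (and the case of unbounded $\Omega$, which the paper leaves implicit) as upfront reductions, whereas the paper keeps $\pi_R\circ f$ explicit and concludes via the triangle inequality $\|f-F_\epsilon\|_{W^{1,p}}\leq\|f-\pi_R\circ f\|_{W^{1,p}}+\|\pi_R\circ f-F_\epsilon\|_{W^{1,p}}$.
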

\begin{proof}
For $R>0$ define  by
\[
\pi_R(v) = 
\begin{cases}
v, &\text{ if } \|v\|_V \leq R; \\
\frac{Rv}{\|v\|_V} , &\text{ if } \|v\|_V > R.
\end{cases}
\]
Observe that $\pi_R:V\to V$ is a Lipschitz mapping with $\operatorname{Lip}(\pi_R) \leq 2$.
Denote
\[
E_R:= \{x\in \Omega \colon \|f(x)\|> R\}.
\]
Then, applying Lemma \ref{lemma:der-same} and \eqref{eq:der-comp} we obtain 
\[
\|\pi_R\circ f  - f\|_{W^{1,p}} 
\leq 3\left(\int_{E_R} \|f(x)\|_{V}^{p}\ \textnormal{d}x\right)^{\frac{1}{p}} + 
   3\sum_{j=1}^n \left(\int_{E_R} \|\partial^{**}_j f(x) \|^p_{V^{**}} \textnormal{d}x\right)^{\frac{1}{p}}. 
\]
By the absolute continuity of the Lebesgue integral, we can choose such $R>0$ that
\[
|E_R|\leq \frac{\varepsilon}{2} \quad\text{ and }\quad  \|\pi_R\circ f  - f\|_{W^{1,p}}  \leq \frac{\varepsilon}{2}.
\]
Fix this $R$.
By Lemma \ref{lemma:SobLip}, for $\epsilon>0$ to be specified later, there is a set $A_{\epsilon}$ such that 
$\pi_R\circ f|_{A_\epsilon}:A_{\epsilon}\to X$ is Lipschitz continuous and $|\Omega\setminus A_\epsilon|< \epsilon$. 
Thanks to  \cite[Theorem 2]{JLS1986} there is an extention $F_\epsilon:\Omega\to V$ with $\operatorname{Lip}(F_\epsilon) \leq c(n)\operatorname{Lip}(\pi_R\circ f|_{A_\epsilon})$ and we can assume that $\|F_\epsilon(x)\|_V\leq R$ for $x\in\Omega$.
Thus, by Lemma \ref{lemma:der-same} we have
\begin{align}
\|\pi_R\circ f - F_\epsilon \|_{W^{1,p}} \leq 2R|\Omega\setminus A_\epsilon|^{\frac{1}{p}} &+ 2\sum_{j=1}^n \left(\int_{\Omega\setminus A_\epsilon} \|\partial^{**}_j f(x) \|^p_{V^{**}} \textnormal{d}x\right)^{\frac{1}{p}}\\
&+ n\operatorname{Lip}(F_{\epsilon})|\Omega\setminus A_\epsilon|^{\frac{1}{p}}.
\end{align}
Then, by the absolute continuity of the Lebesgue integral and \eqref{eq:lip-measure} we can choose $\epsilon\in(0,\frac{\varepsilon}{2})$, so that
\[
 \|\pi_R\circ f - F_\epsilon \|_{W^{1,p}} \leq \frac{1}{\varepsilon}.
\]
Finaly, note that $f(x) = F(x)$ for $x\in A_{\epsilon}\setminus E_R$.
This concludes the proof of the lemma.
\end{proof}
Note that the obtained approximation by Lipschitz mappings is compatible with that of classical Sobolev spaces (\cite[Theorem 4.9]{KLV2021}).
For the case when mappings are defined on a metric space, see \cite[Theorem 8.2.1]{HKST2015} and \cite[Lemma 13]{Haj2009MathAnn}.

\section{Metric differentiability}\label{sec4}
The following lemma provides the central link between our framework and classical theory, expressing the metric differential of a Lipschitz continuous mapping in terms of the derivatives developed in the previous section. 
\begin{lem}\label{lemma:md=rho}
 Let $f:\Omega\to X$ be a Lipschitz continuous map.
 Then there exists a seminorm $\rho$ on $\left(\textnormal{Lip}_{z_0}(X)\right)^*$ such that  
 for $\nu\in\mathbb R^n$
 \begin{equation}\label{eq:md=rho}
 \textnormal{md}(f, x)(\nu) = \rho(\nabla^of(x)\cdot\nu )  \quad \textnormal{for almost every } x\in\Omega,
 \end{equation}
 where we denote $\nabla^of(x)\cdot\nu  = \sum_{j=1}^n\partial^o_jf(x)\nu_j$.
 \end{lem}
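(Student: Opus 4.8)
The plan is to construct $\rho$ directly from an essential gauge sequence for the image of $f$, and then to identify $\rho(\nabla^{\circ}f(x)\cdot\nu)$ with the metric directional derivative $\textnormal{m}\partial_\nu f(x)$ via Theorem~\ref{theorem:lip-derivative}. Since $f$ is Lipschitz continuous, $f(\Omega)$ is separable, so by Remark~\ref{rem:essgauge-sep} I may fix an essential gauge sequence $(\varphi_k)_{k\in\mathbb N}\subset\textnormal{Lip}_{z_0}(X)$ with $\operatorname{Lip}(\varphi_k)=1$ and $d(y,y')=\sup_k|\varphi_k(y)-\varphi_k(y')|$ for $y,y'\in f(\Omega)$. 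I then define
\[
\rho(\Lambda):=\sup_k|\langle\varphi_k,\Lambda\rangle|,\qquad \Lambda\in\left(\textnormal{Lip}_{z_0}(X)\right)^*.
\]
As a supremum of the seminorms $\Lambda\mapsto|\langle\varphi_k,\Lambda\rangle|$, and since $\operatorname{Lip}(\varphi_k)=1$ forces $\rho(\Lambda)\le\|\Lambda\|_{\left(\textnormal{Lip}_{z_0}(X)\right)^*}$, this $\rho$ is a finite seminorm on the dual, fixed once and for all, depending neither on $x$ nor on $\nu$. It is in general only a seminorm because the countable family $(\varphi_k)$ need not separate the whole dual; this is the source of the non-separability caveat in the theorem.

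Next I would pin down the action of $\nabla^{\circ}f(x)$ on the gauge functions. Each $\varphi_k\circ f$ is real valued and Lipschitz, hence classically differentiable off a null set by Rademacher's theorem; let $N$ be the union over $k$ of these null sets. For $x\notin N$ the function $\varphi_k\circ f$ is classically differentiable, so \eqref{eq:nabla0} gives $\langle\varphi_k,\nabla^{\circ}f(x)\cdot\nu\rangle=\nabla(\varphi_k\circ f)(x)\cdot\nu$ for every $k$ and every $\nu$. Note this value is insensitive to the Banach-limit ambiguity in $\partial^\circ_j f$, because the genuine derivative exists. Consequently, for $x\notin N$,
\[
\rho(\nabla^{\circ}f(x)\cdot\nu)=\sup_k\Big|\tfrac{\partial(\varphi_k\circ f)}{\partial\nu}(x)\Big|.
\]

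The core step is to show that this supremum equals $\textnormal{m}\partial_\nu f(x)$ for almost every $x$, and this is precisely Theorem~\ref{theorem:lip-derivative}: being Lipschitz, $f$ is absolutely continuous on almost every line in any direction $\nu$, and $G(x)=\sup_k|\tfrac{\partial(\varphi_k\circ f)}{\partial\nu}(x)|\le\operatorname{Lip}(f)\,|\nu|$ is bounded, hence locally integrable, so the theorem yields $\textnormal{m}\partial_\nu f(x)=G(x)$ for almost every $x$. I regard this as the main obstacle, and it is exactly the point where one cannot naively interchange $\sup_k$ with $\lim_{h\to0}$: the two one-sided bounds are decoupled by the absolute-continuity estimate \eqref{eq:R*abs}, which controls $d(f(x+h\nu),f(x))$ by an integral of $G$ and thereby pushes the limit of the metric difference quotient down to $G$, while the trivial $1$-Lipschitz bound $|\varphi_k(\cdot)-\varphi_k(\cdot)|\le d(\cdot,\cdot)$ pushes it up.

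Finally I would upgrade the fixed-$\nu$, almost-every-$x$ statement to an all-$\nu$, almost-every-$x$ statement. Choosing a countable dense set of directions $\{\nu_m\}\subset\mathbb R^n$, the previous step yields a single null set $Z\supset N$ off which $\textnormal{m}\partial_{\nu_m} f(x)=\rho(\nabla^{\circ}f(x)\cdot\nu_m)$ for every $m$; enlarging $Z$ by the null set where $f$ fails to be metrically differentiable (Kirchheim's theorem), on $\Omega\setminus Z$ one has $\textnormal{md}(f,x)(\nu_m)=\textnormal{m}\partial_{\nu_m}f(x)=\rho(\nabla^{\circ}f(x)\cdot\nu_m)$. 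Both sides are continuous functions of $\nu$ at such $x$: the left side is the seminorm $\textnormal{md}(f,x)(\cdot)$, while the right side is $\rho$ composed with the linear map $\nu\mapsto\nabla^{\circ}f(x)\cdot\nu=\sum_{j=1}^n\partial^\circ_j f(x)\nu_j$, where $\rho$ is $1$-Lipschitz for the dual norm. Density of $\{\nu_m\}$ then extends the identity to all $\nu\in\mathbb R^n$, which is the claim.
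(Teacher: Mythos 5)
Your proof is correct and takes essentially the same route as the paper: the same seminorm $\rho(w)=\sup_k|\langle \varphi_k, w\rangle|$ built from an essential gauge sequence for $f(\Omega)$, Theorem~\ref{theorem:lip-derivative} as the core step identifying $\sup_k\big|\frac{\partial (\varphi_k\circ f)}{\partial\nu}(x)\big|$ with the metric directional derivative, Rademacher's theorem applied to the compositions $\varphi_k\circ f$ to pass to $\nabla^{\circ}f(x)\cdot\nu$, and a countable dense set of directions plus continuity in $\nu$ together with Kirchheim's theorem to obtain a single null set. The only difference is bookkeeping: the paper runs the density-in-directions argument at the level of $\partial^{\circ}_\nu f(x)$ via a Lipschitz estimate on the Banach-limit difference quotients, whereas you run it on the final identity after reducing to $\nabla^{\circ}f(x)\cdot\nu$; both are valid.
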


\begin{proof}
Fix a countable dense subset $H\subset\mathbb R^n$.
Let $\{\varphi_k\}_{k\in\mathbb N}$ be an essential gauge sequence for $f(\Omega)$.
Then due to Lemma \ref{lemma:lip-derivative} and Theorem \ref{theorem:lip-derivative},
almost everywhere in  $\Omega$ for each $\nu\in H$ the following hold
\begin{equation}\label{eq:H-norm-md}
\sup_{k} \bigg| \frac{\partial \varphi_{k}\circ f}{\partial\nu}(x)\bigg| = \|\partial^\circ_\nu f(x)\|_{\left(\textnormal{Lip}_{z_0}(X)\right)^*} =  \textnormal{md}(f, x)(\nu) 
\end{equation}
and
\begin{equation}\label{eq:directional-lip}
\frac{\partial \varphi_{k}\circ f}{\partial\nu}(x) =  \langle \varphi_k, \partial^\circ_\nu f(x) \rangle \quad \textnormal{ for } k\in\mathbb N.
\end{equation}
Now let $\mu\in\mathbb R^n$ and $(\nu_i)_{i\in\mathbb N}$ be a sequence from $H$ converging to $\mu$. 
Then 
\[
| \langle \varphi_k, \partial^\circ_\mu f(x) \rangle -  \langle \varphi_k, \partial^\circ_{\nu_i} f(x) \rangle | \leq \operatorname{Lip}(f)\cdot |\mu - \nu_i|,
\]
from where we conclude
\begin{equation}\label{eq:lip-derivative-mu}
 \langle \varphi_k, \partial^\circ_\mu f(x) \rangle = \lim_{i\to\infty}   \langle \varphi_k, \partial^\circ_{\nu_i} f(x) \rangle
\end{equation}
and 
\begin{equation}\label{eq:norm-mu}
\sup_k|  \langle \varphi_k, \partial^\circ_\mu f(x) \rangle| = \lim_{i\to\infty} \sup_k|  \langle \varphi_k, \partial^\circ_{\nu_i} f(x) \rangle|. 
\end{equation}
From \eqref{eq:H-norm-md}, \eqref{eq:directional-lip} and \eqref{eq:norm-mu} we obtain that 
\[
\sup_k|  \langle \varphi_k, \partial^\circ_\nu f(x) \rangle| =  \textnormal{md}(f, x)(\nu) 
\]
holds for all $\nu\in\mathbb R^n$.
The compositions $\varphi_{k}\circ f$ are Lipschitz continuous. 
Therefore, they are differentiable almost everywhere in $\Omega$
and
\[
\frac{\partial \varphi_{k}\circ f}{\partial\nu}(x) = \nabla (\varphi_{k}\circ f)(x) \cdot \nu
\quad \textnormal{ for } k\in \mathbb N \textnormal{ and } \nu \in \mathbb R^n.
\] 
Thus we have
\begin{align}
\langle \varphi_k, \partial^\circ_\nu f(x) \rangle
= \frac{\partial \varphi_{k}\circ f}{\partial\nu}(x) 
= \sum_{j=1}^n \frac{\partial \varphi_{k}\circ f}{\partial x_j}(x) \cdot\nu_j \\
 = \sum_{j=1}^n \langle \varphi_k, \partial^o_jf(x) \rangle \cdot\nu_j 
  = \langle \varphi_k, \nabla^of(x)\cdot\nu \rangle.
\end{align}
Finally, the desired seminorm is given by
\[
\rho(w) = \sup_k|  \langle \varphi_k, w \rangle|.
\]
\end{proof}
It is important to emphasize that our proof of relation  \eqref{eq:md=rho} relies on both the metric differentiability and the Lipschitz continuity of the mapping. 
This raises the natural question of whether the same relation holds under weaker assumptions.

\subsection{Differentiability with respect to the Sobolev norm}
Metric differentiability can be interpreted as approximation by a seminorm with respect to the uniform norm. Replacing the uniform norm with the Sobolev norm leads to a different notion of metric differentiability. In this subsection, we show that every Sobolev mapping is metrically differentiable with respect to the topology of $W^{1,p}$.
This property might have various consequences, in particular, in the case $p>n$, we can obtain metric differentiability almost everywhere of a Sobolev mapping (Corollary \ref{cor:p-n}). 
The original idea of this approach goes back Reshetnyak's work \cite{R1969}, however, in the proof of Theorem \ref{theorem:metricSob} we follow methods developed by Vodopyanov in \cite{V:2003}.

\begin{defn}
A seminorm $\sigma$ on $\mathbb R^n$ is called a metric differential of a map $f:\Omega\to X$
at a point $x\in\Omega$ in the topology of $W^{1,p}$, $p\in[1,\infty)$, if
\[
\bigg\| \frac{d(f(x+h\cdot), f(x))}{h} - \sigma(\cdot) \bigg\|_{W^{1,p}(B)} = o(1) 
\quad \textnormal{ as } h\to 0,
\]
where $B$ is the unit ball in $\mathbb R^n$.
\end{defn}

\begin{lem}\label{lemma:LipDiff}
Let $f:\Omega\to X$ be a Lipschitz continuous map.
Then, whenever it exists, $\textnormal{md}(f, x)$ is also a metric differential in the topology of $W^{1,p}$, $p\in[1,\infty)$.
\end{lem}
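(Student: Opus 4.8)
The plan is to fix a point $x\in\Omega$ at which $f$ is metrically differentiable, set $\sigma:=\textnormal{md}(f,x)$, introduce the scalar Lipschitz function $\psi_x(z):=d(f(z),f(x))$, and study the blow-ups
\[
g_h(y):=\frac{d(f(x+hy),f(x))}{h}=\frac{\psi_x(x+hy)}{h},\qquad y\in B .
\]
Since $\psi_x(x)=0$ and $\operatorname{Lip}(\psi_x)\le\operatorname{Lip}(f)$, the family $g_h$ is uniformly Lipschitz on $B$, and the definition of metric differentiability gives $d(f(x+hy),f(x))=\sigma(hy)+o(h|y|)=h\,\sigma(y)+o(h)$ uniformly for $y\in B$. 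Hence $g_h\to\sigma$ uniformly on $B$, which settles the function part, $\|g_h-\sigma\|_{L^p(B)}\to0$. It remains to prove the gradient convergence $\nabla g_h\to\nabla\sigma$ in $L^p(B)$, where $\nabla\sigma$ exists a.e.\ because a seminorm is convex and Lipschitz. Since $\textnormal{md}(f,x)$ exists for a.e.\ $x$, and the extra Lebesgue-point hypotheses used below also hold a.e., the assertion will be established at a.e.\ point of metric differentiability.

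Next I would use the uniform bound $\|\nabla g_h\|_{L^\infty(B)}\le\operatorname{Lip}(f)$, which makes $g_h$ bounded in $W^{1,p}(B)$; together with $g_h\to\sigma$ in $L^p$ this yields weak convergence $\nabla g_h\rightharpoonup\nabla\sigma$ in $L^p(B)$ (by reflexivity for $1<p<\infty$, and by the Dunford--Pettis theorem for $p=1$, the $L^\infty$ bound furnishing equi-integrability). For $1<p<\infty$ the space $L^p(B)$ is uniformly convex, so weak convergence upgrades to strong convergence once the norms converge; thus it suffices to prove the energy identity $\int_B|\nabla g_h|^p\,dy\to\int_B|\nabla\sigma|^p\,dy$. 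Changing variables $z=x+hy$ and using $\nabla g_h(y)=(\nabla\psi_x)(x+hy)$, this is equivalent to
\[
(\star)\qquad \frac{1}{|B(x,h)|}\int_{B(x,h)}|\nabla\psi_x|^p
\;\longrightarrow\;\frac{1}{|B|}\int_B|\nabla\sigma|^p .
\]
The case $p=1$ then follows from the case $p=2$ via the inclusion $L^2(B)\hookrightarrow L^1(B)$.

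The energy identity $(\star)$ is the heart of the matter and the step where I follow Vodopyanov's rescaling method. Weak lower semicontinuity already gives $\int_B|\nabla\sigma|^p\le\liminf_h\int_B|\nabla g_h|^p$, so the real content is the matching upper bound. I would work with an essential gauge sequence $(\varphi_k)$ for $f(\Omega)$ and the scalar Lipschitz functions $u_k:=\varphi_k\circ f$, so that $\psi_x=\sup_k|u_k(\cdot)-u_k(x)|$ and, by Lemma \ref{lemma:md=rho}, $\sigma(\nu)=\sup_k|a_k\cdot\nu|$ with $a_k:=\nabla u_k(x)$. At a.e.\ $x$ every $u_k$ is differentiable and $x$ is a common $L^p$-Lebesgue point of all the gradients $\nabla u_k$, so the scalar blow-ups satisfy $\frac{1}{|B(x,h)|}\int_{B(x,h)}|\nabla u_k-a_k|^p\to0$ for each $k$. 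For a \emph{finite} truncation $\psi_x^{(N)}:=\max_{k\le N}|u_k-u_k(x)|$ and $\sigma^{(N)}(\nu):=\max_{k\le N}|a_k\cdot\nu|$ this propagates to $(\star)$: on the region where one index is active, the gradient of the finite maximum coincides with $\nabla u_k$, the interfaces carry vanishing measure, and the active regions match those of $\sigma^{(N)}$ up to small measure.

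The main obstacle is the passage $N\to\infty$, that is, the interchange of the supremum defining the metric with the blow-up limit at the level of gradients. Uniform convergence of $g_h$ controls only the function and not its gradient, and the one-directional magnitude bound $|\partial_e\psi_x(z)|\le\textnormal{m}\partial_e f(z)$ is too lossy, since $|\nabla\sigma|$ genuinely varies with direction. I expect to close this gap with a uniform tail estimate showing that $\frac{1}{|B(x,h)|}\int_{B(x,h)}\big(|\nabla\psi_x|^p-|\nabla\psi_x^{(N)}|^p\big)$ is small uniformly for small $h$, using the Lipschitz approximation of $f$ on large subsets (Lemma \ref{lemma:SobLip}) together with absolute continuity on lines and the essential-gauge representation of $d$ to control the contribution of the tail $k>N$. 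Granting this, letting first $h\to0$ and then $N\to\infty$ yields $(\star)$, hence the strong $W^{1,p}$ convergence and the lemma.
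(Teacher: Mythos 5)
There is a genuine gap, and it sits exactly where you yourself flag it. Your reduction is fine: uniform convergence of $g_h(\nu)=d(f(x+h\nu),f(x))/h$ to $\sigma$ handles \eqref{eq:L_p-conv}; the uniform bound $|\nabla g_h|\le\operatorname{Lip}(f)$ gives weak convergence $\nabla g_h\rightharpoonup\nabla\sigma$; and uniform convexity reduces strong convergence to the energy identity $(\star)$. But $(\star)$ is the entire content of the lemma, and nothing in the proposal establishes it: weak convergence plus lower semicontinuity only give $\int_B|\nabla\sigma|^p\le\liminf_h\int_B|\nabla g_h|^p$, and the matching upper bound --- your ``uniform tail estimate'' allowing the interchange of the truncation $N\to\infty$ with the blow-up $h\to0$ --- is merely announced as something you ``expect to close.'' The tools you invoke for it do not help: Lemma \ref{lemma:SobLip} is vacuous here (it produces Lipschitz restrictions of Sobolev maps, but $f$ is already Lipschitz, and it carries no gradient-energy matching of the kind you need), and, as you note, the directional bound $|\partial_e\psi_x|\le \textnormal{m}\partial_e f$ is too lossy. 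Worse, the quantity $|\nabla\psi_x|^p-|\nabla\psi_x^{(N)}|^p$ has no sign: enlarging the family inside a supremum can destroy gradient as well as create it (taking a sup with a constant flattens regions), so even the formulation of the tail bound needs care. As written, the proof is incomplete at its central step.

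For comparison, the paper closes exactly this gap by a more elementary device that never touches the gauge sequence at the gradient level. For a sequence $h_k\to0$ it considers the second difference quotient $F(r,h_k)=\bigl[d(f(x+h_k(\nu+re_j)),f(x))-d(f(x+h_k\nu),f(x))\bigr]/(rh_k)$. Metric differentiability at $x$ forces $F(r,h_k)\to\bigl[\sigma(\nu+re_j)-\sigma(\nu)\bigr]/r$ as $k\to\infty$ \emph{uniformly} in $(r,\nu)$, while Lipschitz continuity of $\nu\mapsto g_{h_k}(\nu)$ gives $\lim_{r\to0}F(r,h_k)=\partial_j g_{h_k}(\nu)$ for every $k$ and a.e.\ $\nu$. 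A Moore--Osgood interchange of the two limits then yields $\partial_j g_{h_k}(\nu)\to\partial_j\sigma(\nu)$ pointwise a.e.\ in $B$, and the uniform bound $|\nabla g_{h_k}|\le\operatorname{Lip}(f)$ with dominated convergence gives \eqref{eq:Der-conv} directly --- no weak compactness, no energy identity, no truncation of the gauge sequence. You could either adopt this argument outright, or use its pointwise conclusion to prove your identity $(\star)$; in the latter case the weak-convergence scaffolding becomes unnecessary.
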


\begin{proof}
Let $x$ be a point of metric differentiability of $f$. We need to verify the following relations
\begin{equation}\label{eq:L_p-conv}
\int_B \bigg|\frac{d(f(x+h\nu), f(x))}{h} - \textnormal{md}(f, x)(\nu)\bigg|^p  \ \textnormal{d}\nu = o(1)
\end{equation}
and
\begin{equation}\label{eq:Der-conv}
\int_B \bigg|\frac{\partial}{\partial \nu_j}\frac{ d(f(x+h\nu), f(x))}{h} - \frac{\partial}{\partial \nu_j}\textnormal{md}(f, x)(\nu)\bigg|^p  \ \textnormal{d}\nu = o(1)
\end{equation}
Note that \eqref{eq:L_p-conv} holds due to uniform convergence in \eqref{eq:def-md}. 
Now prove \eqref{eq:Der-conv}. 
Let  $(h_k)_{k\in\mathbb N}$ be a sequence such that $h_k\to 0$. Then
\[
\textnormal{md}(f, x)(\nu) = \lim_{k\to \infty} \frac{d(f(x+h_k\nu), f(x))}{h_k}.
\]
Consider 
\[
F(r,h_k) = \frac{d(f(x+h_k(\nu+re_j)), f(x)) - d(f(x+h_k\nu), f(x))}{rh_k}.
\]
Then, due to metric differentiability in $x$ we obtain that
\[
\lim_{k\to \infty} F(r,t) = \frac{\textnormal{md}(f, x)(\nu + re_j) - \textnormal{md}(f, x)(\nu)}{r}
\]
uniformly with respect to $r$ and $\nu$. 
On the other hand, function $\nu\mapsto \frac{d(f(x+h_k\nu), f(x))}{h_k}$ is Lipschitz continuous.
Therefore 
\[
\lim_{r\to 0} F(r,h_k) = \frac{\partial}{\partial \nu_j}\frac{d(f(x+h_k\nu), f(x))}{h_k}
\]
holds for all $k\in\mathbb N$ and $\nu\in B\setminus\bigcup_{k}\Sigma_k$, with $|\Sigma_k| = 0$.
Therefore, we can interchange the order of limits and derive that
\[
\frac{\partial}{\partial \nu_j}\textnormal{md}(f, x)(\nu) = \lim_{k\to\infty} \frac{\partial}{\partial \nu_j}\frac{d(f(x+h_k\nu), f(x))}{h_k}
\quad \textnormal{for almost every } \nu \in B.
\]
By the dominated convergence theorem and the arbitrariness of the sequence $(h_k)_{k\in\mathbb N}$ 
we conclude \eqref{eq:Der-conv}.
\end{proof}

In the proof of the following theorem, we use an isometric embedding into a Banach space. That might interfere with the idea of developing intrinsic methods presented in the paper. As an excuse, one can say that the fact of metric differentiability does not depend on embedding. The reason we need a Banach space is that we use (in Lemma \ref{lemma:lipapprox}) 
the extension of a Lipschitz mapping defined on Euclidean space. 

\begin{thm}\label{theorem:metricSob}
Every map in the Sobolev space 
$W^{1,p}(\Omega; X)$, $p\in [1, \infty)$
is almost everywhere metrically differentiable in the topology of $W^{1,p}$.
There exists a seminorm $\rho$ on $\left(\textnormal{Lip}_{z_0}(X)\right)^*$ such that  
  $\mathbb R^n \ni \nu\mapsto \rho(\nabla^of(x)\cdot\nu ) $
is the metric differential of a map $f$
at a point $x\in\Omega$ in the topology of $W^{1,p}$. 
\end{thm}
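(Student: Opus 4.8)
The plan is to transfer the Lipschitz case, already handled in Lemma~\ref{lemma:LipDiff}, to a general Sobolev map via the Lusin-type Lipschitz approximation of Lemma~\ref{lemma:lipapprox}. I would first fix an isometric embedding $\iota\colon X\to V$ into a Banach space. Since $d(f(y),f(x))=\|\iota f(y)-\iota f(x)\|_V$, neither the classical nor the $W^{1,p}$ metric differential is affected by $\iota$, so I may work with $\iota f\in W^{1,p}(\Omega;V)$ and invoke the Banach-target results, henceforth identifying $f$ with $\iota f$; this is the point anticipated in the remark preceding the theorem. As $W^{1,p}$-metric differentiability at a point is local, I may also assume $\Omega$ is a bounded domain. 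Writing $R_hg(x,\nu):=\|g(x+h\nu)-g(x)\|_V/h$ for $\nu\in B$, I record the intrinsic identity
\[
\rho(\nabla^{\circ}f(x)\cdot\nu)=\textnormal{m}\partial_\nu f(x)\qquad\text{for a.e.\ }x,\ \nu\in\mathbb R^n,
\]
with $\rho(w)=\sup_k|\langle\varphi_k,w\rangle|$ and $(\varphi_k)$ an essential gauge sequence for $f(\Omega\setminus\Sigma)$; it follows by combining Lemma~\ref{lemma:lip-derivative}, Theorem~\ref{theorem:lip-derivative} and Theorem~\ref{theorem:lip-derivative2}, and it identifies the candidate differential.

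For each $j\in\mathbb N$, Lemma~\ref{lemma:lipapprox} provides a Lipschitz map $F_j\colon\Omega\to V$ with $\|f-F_j\|_{W^{1,p}}<1/j$ and $|\Omega\setminus A_j|<1/j$, where $A_j:=\{x:f(x)=F_j(x)\}$. On $A_j$ the maps coincide, so, as in the proof of Lemma~\ref{lemma:der-same}, $\textnormal{m}\partial_\nu F_j(x)=\textnormal{m}\partial_\nu f(x)$ for a.e.\ $x\in A_j$; together with the identity above and $\textnormal{md}(F_j,x)(\nu)=\textnormal{m}\partial_\nu F_j(x)$ this gives $\textnormal{md}(F_j,x)(\nu)=\rho(\nabla^{\circ}f(x)\cdot\nu)$ for a.e.\ $x\in A_j$ (for each $\nu$, hence all $\nu$, both sides being seminorms in $\nu$). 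Since $F_j$ is Lipschitz, Lemma~\ref{lemma:LipDiff} gives $R_hF_j(x,\cdot)\to\textnormal{md}(F_j,x)(\cdot)$ in $W^{1,p}(B)$ as $h\to0$ for a.e.\ $x$. It therefore suffices to prove, for each fixed $j$, that
\[
\big\|R_hf(x,\cdot)-R_hF_j(x,\cdot)\big\|_{W^{1,p}(B)}\xrightarrow[h\to0]{}0\qquad\text{for a.e.\ }x\in A_j;
\]
this yields $R_hf(x,\cdot)\to\rho(\nabla^{\circ}f(x)\cdot\cdot)$ in $W^{1,p}(B)$ at a.e.\ $x\in A_j$, and since $\bigcup_jA_j$ is conull the theorem follows.

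The gradient part of this last estimate is the main obstacle. Set $\Psi:=R_hf(x,\cdot)-R_hF_j(x,\cdot)\in W^{1,p}(B)$ and $G:=\{\nu\in B:x+h\nu\in A_j\}$. For $x\in A_j$ and $\nu\in G$ both $x$ and $x+h\nu$ lie in $A_j$, so $f$ and $F_j$ agree there and $\Psi(\nu)=0$; thus $\Psi=0$ a.e.\ on $G$, and since the weak gradient of a Sobolev function vanishes a.e.\ on any set where it is constant, $\nabla_\nu\Psi=0$ a.e.\ on $G$ as well. Consequently $\|\nabla_\nu\Psi\|_{L^p(B)}$ is carried by the bad set $B\setminus G$, of measure $h^{-n}|B(x,h)\setminus A_j|$, where I would use the pointwise bounds $|\nabla_\nu R_hf(x,\nu)|\le g(x+h\nu)$ (with $g$ the canonical Reshetnyak upper gradient of $f$ from Theorem~\ref{theorem:Sobolev-space}, valid via Proposition~\ref{lem:lemmaAC}) and $|\nabla_\nu R_hF_j(x,\nu)|\le\operatorname{Lip}(F_j)$; this dominates $\int_{B\setminus G}|\nabla_\nu\Psi|^p$ by a constant multiple of $\frac{1}{|B(x,h)|}\int_{B(x,h)\setminus A_j}g^p$ plus $\operatorname{Lip}(F_j)^p\,\frac{|B(x,h)\setminus A_j|}{|B(x,h)|}$. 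For the $L^p$ part, the reverse triangle inequality together with absolute continuity on lines and $u(x)=0$ on $A_j$ yields the global bound $|\Psi(\nu)|\le\frac1h\int_0^hg_u(x+t\nu)\,dt$, where $u:=f-F_j$ has Reshetnyak upper gradient $g_u$ with, crucially, $g_u=0$ a.e.\ on $A_j$ (Lemma~\ref{lemma:der-same}); Jensen and Fubini then bound $\int_B|\Psi|^p$ by a constant times $\frac1h\int_0^h\frac{1}{|B(x,t)|}\int_{B(x,t)}g_u^p\,dt$.

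Finally I would let $h\to0$ with $j$ fixed. At a.e.\ $x\in A_j$ the point $x$ is a density point of $A_j$ and a Lebesgue point of both $g^p\mathbf 1_{\Omega\setminus A_j}$ and $g_u^p$, whose values there are $0$; consequently all three averages above tend to $0$, while $\operatorname{Lip}(F_j)$ stays fixed. This proves the estimate and hence the theorem. The delicate point is confining the $\nu$-gradient to the shrinking bad set: this is exactly what $\nabla_\nu\Psi=0$ a.e.\ on $G$ secures, and it is what renders the $j$-dependent (and possibly large) Lipschitz constant of $F_j$ harmless, since $j$ is frozen before the passage $h\to0$.
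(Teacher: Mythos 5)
Your proposal is correct, and its skeleton coincides with the paper's: isometric embedding into a Banach space $V$, the Lusin--Lipschitz approximation of Lemma~\ref{lemma:lipapprox}, Lemma~\ref{lemma:LipDiff} for the Lipschitz piece, and a density-point argument on the coincidence set. Where you genuinely diverge is in how the remainder $\Psi_h=R_hf(x,\cdot)-R_hF_j(x,\cdot)$ is controlled in $W^{1,p}(B)$. The paper performs a \emph{second} Lusin-type approximation in the $\nu$-variable: it replaces its $\eta_h$ by a Lipschitz function $\xi_h$ on $B$, argues that $\operatorname{Lip}(\xi_h)$ and $\operatorname{Lip}(\psi_h)$ can be taken uniform in $h$, and estimates $\|\xi_h-\psi_h\|_{W^{1,p}(B)}$ via the decomposition $B\subset(B\setminus E_h)\cup(B\setminus D_h)\cup(E_h\cap D_h)$, paying sup-norms and Lipschitz constants times small measures, with an auxiliary parameter $\varepsilon_1\to 0$ at the end. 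You instead exploit locality of the weak gradient: $\Psi_h=0$ a.e.\ on the good set $G$, hence $\nabla_\nu\Psi_h=0$ a.e.\ on $G$, so the gradient term is carried by $B\setminus G$ and dominated by $g(x+h\nu)+\operatorname{Lip}(F_j)$, while the function term is bounded by integrating the upper gradient $g_u$ of $u=f-F_j$ along rays, with $g_u=0$ a.e.\ on $A_j$; Lebesgue and density points then annihilate all three averages as $h\to 0$ with $j$ frozen. Your route buys a cleaner argument: it avoids the paper's uniform-in-$h$ Lipschitz-constant claim for $\xi_h$ (the least transparent step of the published proof) and the extra parameter $\varepsilon_1$, replacing them with the standard fact that weak gradients vanish a.e.\ on level sets together with upper-gradient estimates. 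The paper's route, in exchange, never needs ray-wise integral bounds, so it reuses the Lipschitz-truncation machinery as a black box and requires no Fubini argument in polar coordinates. Two steps you should make explicit to close the argument: first, that the \emph{canonical} upper gradient $g_u$ indeed vanishes a.e.\ on $A_j$, which follows from Theorem~\ref{theorem:Sobolev-space} and Lemma~\ref{lemma:der-same} applied to $u$ and the zero map; second, the polar-coordinate Fubini argument showing that for a.e.\ $x$ the map $u$ is absolutely continuous with metric speed at most $g_u$ on a.e.\ ray emanating from $x$ --- this is what makes your pointwise bound $|\Psi_h(\nu)|\le\frac1h\int_0^h g_u(x+t\nu)\,\textnormal{d}t$ valid for all small $h$ simultaneously rather than for a single $h$ at a time.
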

\begin{proof}
Let $k:X\to V$ be an isometric embedding with $k(z_0)=0$. Then $k\circ f \in W^{1,p}(\Omega; V)$.
Let $\Sigma$ be a null set such that the image $k\circ f(\Omega\setminus\Sigma)$ is separable in $V$ and let $\{\phi_k\}_{k\in\mathbb N}$ be an essential gauge sequence for 
$k\circ f(\Omega\setminus\Sigma)$.
Define seminorm
$
\rho(w) = \sup_k|  \langle \varphi_k, w \rangle|
$
for $w\in \left(\textnormal{Lip}_{0}(V)\right)^*$.

By Lemma \ref{lemma:lipapprox} for each $\varepsilon>0$ there is a measurable set $A_\varepsilon\subset\Omega$
and a Lipschitz continuous map $F_\varepsilon:\Omega\to X$ such that $|\Omega\setminus A_\varepsilon|<\varepsilon$
and $F_\varepsilon(x) = k\circ f(x)$ for $x\in A_\varepsilon$. 
Let $x\in A_\varepsilon$ be a density point, a point of metric differentiability of $F_\varepsilon$,
and a point where partial derivatives $\frac{\partial \phi_k\circ k\circ  f}{\partial x_j}(x)$ and $\frac{\partial \phi_k\circ F_\varepsilon}{\partial x_j}(x)$
exist for $j=1,\dots n$ and $k\in \mathbb N$.
Then $\langle \phi_k, \nabla^{\circ} (k\circ f)(x) \rangle = \langle \phi_k, \nabla^{\circ}F_\varepsilon(x) \rangle $,
and in particular, $\rho(\nabla^{\circ}k\circ f(x)\cdot\nu ) = \rho(\nabla^{\circ}F_\varepsilon (x)\cdot\nu )$.
For $h>0$ denote $D_h = \{\nu\in B \colon x + h\nu \in A_\varepsilon\}$.
Because $x$ is a density point, we have 
\begin{equation}\label{eq:BD-0}
\lim_{h\to 0} |B\setminus D_h| = 0.
\end{equation}
Consider functions
\[
\eta_h(\nu) = \frac{\|k\circ f(x+h\nu) - k\circ f(x)\|_V}{h} - \rho(\nabla^{\circ}(k\circ f)(x)\cdot\nu )
\]
and
\[
\psi_h(\nu) = \frac{\| F_\varepsilon(x+h\nu) -  F_\varepsilon(x)\|_V}{h} - \rho(\nabla^{\circ}F_\varepsilon (x)\cdot\nu ).
\]
Note that $\eta_h(\nu) = \psi_h(\nu)$ for $\nu\in\ D_h$, and
due to Lemma \ref{lemma:LipDiff} 
\begin{equation}\label{eq:psi-0}
\lim_{h\to 0} \| \psi_h\|_{W^{1,p}(B)} = 0.
\end{equation} 
Now for $\varepsilon_1>0$ find Lipschitz function $\xi_h:B\to \mathbb R$ such that 
$\eta_h(\nu) = \xi_h(\nu)$ on measurable set $E_h\subset B$, $|B\setminus E_h| < \varepsilon_1$, and
$\|\eta_h - \xi_h \|_{W^{1,p}(B)}\leq\varepsilon_1$ (Lemma \ref{lemma:lipapprox}).
Then $\psi_h(\nu) = \xi_h(\nu)$ for all $\nu\in D_h\cap E_h$ and $\nabla\psi_h(\nu) = \nabla\xi_h(\nu)$ for almost every $\nu\in D_h\cap E_h$.
Show that the Lipschitz constants $\operatorname{Lip}(\psi_h)$ and $\operatorname{Lip}(\xi_h)$ do not depend on $h$.
Indeed
\[
\operatorname{Lip}(\psi_h) \leq \operatorname{Lip}(F_\varepsilon) + \bigg(\sum_{j=1}^n \|\partial^o_jf(x) \|^2_{\left(\textnormal{Lip}_{0}(X)\right)^*} \bigg)^{\frac{1}{2}}.
\]
While $\operatorname{Lip}(\xi_h)$ depends only on $|\nabla \eta_h (\nu)|$, for which we have the following estimate
\[
|\nabla \eta_h(\nu)| \leq g(x + h\nu) + \bigg(\sum_{j=1}^n \|\partial^o_jf(x) \|^2_{\left(\textnormal{Lip}_{0}(X)\right)^*} \bigg)^{\frac{1}{2}}.
\]
For fixed $\varepsilon_1$ it implies $\operatorname{Lip}(\xi_{h_1}) \leq \operatorname{Lip}(\xi_{h_2})$, when $h_1<h_2$.
So, we can assume that $\operatorname{Lip}(\xi_h) \leq \operatorname{Lip}(\xi_{h_0})$ for all $h\leq h_0$.
To estimate the norm $\|\xi_h - \psi_h \|_{W^{1,p}(B)}$ we note that $B\subset (B\setminus E_h)\cup (B\setminus D_h)\cup (E_h\cap D_h)$
and derive
\begin{align*}
\|\xi_h - \psi_h \|_{W^{1,p}(B)} \leq &C\cdot\varepsilon_1^{\frac{1}{p}} + C|B\setminus D_h|^{\frac{1}{p}} + 0\\
+ &C'\cdot\varepsilon_1^{\frac{1}{p}} + C'|B\setminus D_h|^{\frac{1}{p}} + 0,
\end{align*}
where $C = \sup_{B}|\xi_h(\nu) - \psi_h(\nu)| $ and $C' = 2\max\{\operatorname{Lip}(\psi_h), \operatorname{Lip}(\xi_h) \}$,
both constants do not depend on $h$.
Finally, from \eqref{eq:BD-0}, \eqref{eq:psi-0} and 
\[
\|\eta_h \|_{W^{1,p}(B)} \leq \|\eta_h -\xi_h \|_{W^{1,p}(B)} + \|\xi_h - \psi_h \|_{W^{1,p}(B)} + \|\psi_h \|_{W^{1,p}(B)}
\]
we have
\[
\limsup_{h\to 0 } \|\eta_h \|_{W^{1,p}(B)} \leq \varepsilon_1 + (C+ C')\varepsilon_1^{\frac{1}{p}}.
\]
Due to the arbitrariness of $\varepsilon_1$ and $\varepsilon$, we conclude that $k\circ f$ is almost everywhere metrically differentiable in the topology of $W^{1,p}$
with metric differential $\nu\mapsto \sup_k|  \langle \varphi_k, \nabla^{\circ} (k\circ f)(x)\cdot\nu \rangle|$.
It remains to note that $d(f(x+h\nu), f(x)) = \|k\circ f(x+h\nu) - k\circ f(x)\|_V$. Therefore mapping $f$ is also almost everywhere metrically differentiable in the topology of $W^{1,p}$
and with property \eqref{eq:comisometric} its corresponding metric differential could be writen as 
$\nu\mapsto \sup_k|  \langle \phi_k\circ k, \nabla^{\circ} f(x)\cdot\nu \rangle|$.
\end{proof}

The following corollary provides us with examples of mappings that are not necessarily Lipschitz continuous, but metrically differentiable almost everywhere.
\begin{cor}\label{cor:p-n}
Every function in the Sobolev space $W^{1,p}(\Omega; X)$, $p>n$, 
has a continuous representative, which is almost everywhere metrically differentiable.
\end{cor}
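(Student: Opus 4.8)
The plan is to derive Corollary \ref{cor:p-n} from Theorem \ref{theorem:metricSob} by exploiting the Sobolev embedding theorem in the supercritical regime $p > n$. First I would recall the classical Morrey-type embedding: for $p > n$, every function in $W^{1,p}(\Omega)$ has a continuous (indeed H\"older continuous) representative, and moreover the oscillation over a ball is controlled by the $L^p$ norm of the gradient on that ball. The key point is to transfer this to the metric-valued setting. Applying an isometric embedding $k \colon X \to V$ with $k(z_0) = 0$ as in the proof of Theorem \ref{theorem:metricSob}, and taking an essential gauge sequence $(\varphi_k)_{k \in \mathbb N}$ for the separable image, each scalar function $\varphi_k \circ f = \varphi_k \circ k^{-1} \circ (k\circ f)$ lies in $W^{1,p}(\Omega)$ and hence has a continuous representative. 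Using the relation $d(f(x),f(y)) = \sup_k |\varphi_k \circ f(x) - \varphi_k \circ f(y)|$ and the uniform Morrey estimate, I would show that the family $\{\varphi_k \circ f\}$ is equicontinuous (the modulus of continuity bound depends only on $\operatorname{Lip}(\varphi_k) \le 1$ and the common Reshetnyak upper gradient $g \in L^p$), so that $x \mapsto f(x)$ itself admits a continuous representative $\widetilde f$.

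Once continuity is established, the metric differentiability almost everywhere follows from upgrading the $W^{1,p}$-metric differentiability of Theorem \ref{theorem:metricSob} to pointwise metric differentiability, again using that $p > n$. Concretely, at a point $x$ where $f$ is metrically differentiable in the topology of $W^{1,p}$ with differential $\nu \mapsto \rho(\nabla^\circ f(x)\cdot\nu)$, the rescaled difference quotients $\nu \mapsto \frac{d(\widetilde f(x+h\nu), \widetilde f(x))}{h}$ converge to the seminorm $\rho(\nabla^\circ f(x)\cdot\nu)$ in $W^{1,p}(B)$. Since these difference quotients are Sobolev functions on the unit ball $B \subset \mathbb R^n$ and $p > n$, the Morrey embedding $W^{1,p}(B) \hookrightarrow C(B)$ implies that the convergence is in fact uniform on $B$. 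Uniform convergence of the difference quotients to the candidate seminorm is precisely the definition \eqref{eq:def-md} of pointwise metric differentiability, so $\widetilde f$ is metrically differentiable at almost every such $x$.

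The main obstacle I anticipate is the careful justification that the $W^{1,p}$-convergence transfers to uniform convergence at a fixed base point, keeping track of the fact that the difference quotient functions depend on the parameter $h$. One must ensure the embedding constant in $\|g\|_{C(B)} \le C_{n,p}\|g\|_{W^{1,p}(B)}$ is uniform in $h$ (which it is, since $B$ is fixed) and that the limiting seminorm is continuous, so that the uniform limit is identified correctly. A secondary technical point is handling the null set on which metric differentiability in the $W^{1,p}$ sense may fail, and verifying that the continuous representative $\widetilde f$ can be chosen consistently with the representatives used in Theorem \ref{theorem:metricSob}; this is routine since continuous representatives of continuous functions are unique. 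With these points settled, combining the equicontinuity argument for continuity with the Morrey upgrade of $W^{1,p}$-differentiability yields the corollary.
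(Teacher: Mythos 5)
Your proposal is correct and follows essentially the same route as the paper: the paper cites Reshetnyak's theorem for the continuous representative (which your Morrey/essential-gauge-sequence argument essentially reproves) and then, exactly as you do, invokes the Sobolev embedding $W^{1,p}(B)\hookrightarrow C(\overline{B})$ for $p>n$ to upgrade the $W^{1,p}$-metric differentiability of Theorem \ref{theorem:metricSob} to uniform convergence of the difference quotients, i.e.\ pointwise metric differentiability almost everywhere.
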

\begin{proof}
Thanks to \cite[Theorem 6.2]{Reshetnyak97}, mapping $f$ has a continuous representative.
Then, by the Sobolev embedding theorem for $W^{1,p}(B)$, the corollary follows.
\end{proof}
We note that the same conclusion was reached in \cite[Corollary 3.4]{K2007}, albeit using different methods.

\section{The case of linear target}\label{sec5}
When $X$ is a linear space, a more detailed description is available.
Although we cannot establish weak* differentiability, it is still possible to have the following differentiability-like results.

\begin{thm}\label{thm:Vtarget}
Let $V$ be a Banach space and let $f:\Omega\to V$ be a Lipschitz mapping.
Then there exists a map $\nabla^{**}f :\Omega \to \mathcal B(\mathbb R^n; V^{**})$
such that for any separable subset $D^*\subset V^*$ for almost every $x\in\Omega$
it holds 
\begin{equation}\label{eq:diffV}
\lim_{y\to x} \frac{\langle v^*, f(y) - f(x) - \nabla^{**}f(x)\cdot(y-x)\rangle}{|y-x|} = 0
\end{equation}
for all $v^*\in D^*$. 
Moreover there exists a sequence $(v^*_k)_{k\in\mathbb N}$ in $V^*$ such that
\[
\textnormal{md}(f, x)(\nu) = \sup_k\frac{\langle v^*, \nabla^{**}f(x)\cdot\nu\rangle}{\|v^*_k\|_{V^*}}  \quad \text{ for } \nu\in\mathbb R^n \textnormal{ and for almost every } x\in\Omega.
\] 
\end{thm}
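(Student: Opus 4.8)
The plan is to assemble the operator $\nabla^{**}f$ from the canonical weak weak* derivatives of the previous sections and then to prove the two assertions separately: the pointwise weak* differentiability \eqref{eq:diffV} by a density-plus-Lipschitz uniformization argument, and the representation of the metric differential by invoking Lemma \ref{lemma:md=rho} with a carefully chosen essential gauge sequence coming from $V^*$.

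First I would define, for every $x\in\Omega$, the bounded linear operator $\nabla^{**}f(x)\colon\mathbb R^n\to V^{**}$ by $\nabla^{**}f(x)\cdot\nu=\sum_{j=1}^n\partial^{**}_jf(x)\,\nu_j$, where $\partial^{**}_jf(x)\in V^{**}$ is the restriction to $V^*\subset\textnormal{Lip}_0(V)$ of the canonical weak weak* derivative $\partial^\circ_jf(x)$. Since $f$ is Lipschitz it satisfies \eqref{eq:ww-diff-point} with $C=\operatorname{Lip}(f)$ in every direction, so each $\partial^\circ_jf(x)$ is defined at every point with $\|\partial^{**}_jf(x)\|_{V^{**}}\le\operatorname{Lip}(f)$, whence $\nabla^{**}f(x)$ is bounded. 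The identity I will use repeatedly, inherited from Lemma \ref{lemma:lip-derivative} applied to $\varphi=v^*$, is $\langle v^*,\nabla^{**}f(x)\cdot\nu\rangle=\frac{\partial(v^*\circ f)}{\partial\nu}(x)$ at every point where the scalar Lipschitz function $v^*\circ f$ is directionally differentiable.

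For \eqref{eq:diffV} I would fix a separable $D^*\subset V^*$ and a countable dense sequence $(v_i^*)$ in $D^*$. Each $v_i^*\circ f$ is a real-valued Lipschitz function, hence differentiable off a null set $N_i$ by Rademacher's theorem; set $N=\bigcup_i N_i$. At $x\notin N$ the identity above gives $\langle v_i^*,f(y)-f(x)-\nabla^{**}f(x)\cdot(y-x)\rangle=o(|y-x|)$ for every $i$. To reach an arbitrary $v^*\in D^*$ I would split $v^*=v_i^*+(v^*-v_i^*)$ and estimate the remainder uniformly in $y$: using $\|f(y)-f(x)\|_V\le\operatorname{Lip}(f)\,|y-x|$ and $\|\nabla^{**}f(x)\cdot(y-x)\|_{V^{**}}\le\sqrt n\,\operatorname{Lip}(f)\,|y-x|$, the $(v^*-v_i^*)$-contribution to the difference quotient is at most $(1+\sqrt n)\operatorname{Lip}(f)\,\|v^*-v_i^*\|_{V^*}$. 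Letting $y\to x$ and then sending $\|v^*-v_i^*\|_{V^*}\to 0$ forces $\limsup_{y\to x}|\,\cdot\,|/|y-x|=0$, which proves \eqref{eq:diffV} on the single null set $N$ that depends only on $D^*$. This uniformization over all of $D^*$ is exactly the main obstacle, and it is where the Lipschitz hypothesis is indispensable.

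Finally, for the metric-differential formula I would exploit the freedom in the choice of gauge in Lemma \ref{lemma:md=rho}. Since $\Omega$ is separable and $f$ is continuous, $Z:=\overline{\operatorname{span}}\,f(\Omega)\subset V$ is separable, so there exists a sequence $(v_k^*)\subset V^*$ with $\|v_k^*\|_{V^*}=1$, closed under $v^*\mapsto-v^*$, and norming for $Z$, i.e.\ $\|z\|_V=\sup_k\langle v_k^*,z\rangle$ for all $z\in Z$. As differences of points of $f(\Omega)$ lie in $Z$, the symmetry of the family yields $\sup_k|v_k^*(x)-v_k^*(y)|=\|x-y\|_V=d(x,y)$ for $x,y\in f(\Omega)$, so $(v_k^*)$ is an essential gauge sequence for $f(\Omega)$. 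Applying Lemma \ref{lemma:md=rho} with this gauge gives $\textnormal{md}(f,x)(\nu)=\sup_k|\langle v_k^*,\nabla^\circ f(x)\cdot\nu\rangle|=\sup_k\langle v_k^*,\nabla^{**}f(x)\cdot\nu\rangle$ for almost every $x$, where the last equality uses the symmetry of the family together with $\langle v_k^*,\nabla^\circ f(x)\cdot\nu\rangle=\langle v_k^*,\nabla^{**}f(x)\cdot\nu\rangle$ from the definition of $\partial^{**}_j$. Dividing by $\|v_k^*\|_{V^*}=1$ recovers the stated representation, completing the proof.
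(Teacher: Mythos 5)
Your proposal is correct and follows essentially the same route as the paper: the same definition of $\nabla^{**}f$ by restricting the weak weak* derivatives to $V^*\subset\textnormal{Lip}_0(V)$, and the same Rademacher-plus-density argument for \eqref{eq:diffV}, down to the constant $(1+\sqrt{n})\operatorname{Lip}(f)$. For the metric-differential formula the paper only says ``the theorem follows,'' and your completion---building a symmetric norm-one norming sequence for $\overline{\operatorname{span}}\,f(\Omega)$ via Hahn--Banach and feeding it as the essential gauge sequence into the proof of Lemma \ref{lemma:md=rho}---is exactly the intended argument, spelled out in more detail than the paper itself.
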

\begin{proof}
Let $\varphi: V\to \mathbb R$ be a Lipschitz continuous function. 
Then for any $\nu\in\mathbb R^n$, $h>0$, and $x\in\Omega$
\[
\frac{|\varphi\circ f(x+h\nu) - \varphi\circ f(x)|}{|h|} \leq \operatorname{Lip}(\varphi)\operatorname{Lip}(f)|\nu|.
\]
Therefore due to Lemma \ref{lemma:lip-derivative} for every $x\in\Omega$ 
there exist a linear functional $\partial^\circ_\nu f(x)\in\left(\textnormal{Lip}_{0}(V)\right)^*$.
Because $V^*\subset \textnormal{Lip}_{0}(V)$ we can define a linear functional $\partial^{**}_{\nu}f(x)\in V^{**}$
as
\[
\big\langle v^*, \partial^{**}_\nu f(x)\big\rangle_{V^*,V^{**}} 
:= \big\langle v^*, \partial^\circ_\nu f(x) \big\rangle_{\textnormal{Lip}_{0}(V), \left(\textnormal{Lip}_{0}(V)\right)^*}.
\]
Set 
\[
\nabla^{**}f(x) = \big(\partial^{**}_{1}f(x), \dots, \partial^{**}_{n}f(x) \big).
\]

Let $\{v^*_k\}_{k\in \mathbb N}$ be a dense sequence in $D^*$.
Then $x\mapsto \langle v^*_k, f(x)\rangle$ is a Lipshitz continuous function on $\Omega$.
By the Rademacher theorem, there exists a null-set $N\subset\Omega$ 
such that for all $k\in\mathbb N$ function $\langle v^*_k, f(\cdot)\rangle$ is differentiable at any $x\in\Omega\setminus N$.
Moreover by Lemma \ref{lemma:lip-derivative} 
\[
\nabla \langle v^*_k, f(x)\rangle = \langle v^*_k, \nabla^{**}f(x)\rangle \quad \text{ for } x\in\Omega\setminus N.
\]
Therefore
\[
\lim_{y\to x} \frac{\langle v^*_k, f(y) - f(x) - \nabla^{**}f(x)\cdot(y-x)\rangle}{|y-x|} = 0
\]
for all  $v^*_k$ and $x\in\Omega\setminus N$.

Now let $v^*\in D^*$, then for any $\varepsilon>0$ there exist $v^*_k$ with $\|v^* -v^*_k \|_{V^*}\leq \varepsilon$.
So we have
\begin{multline*}
\frac{|\langle v^*, f(y) - f(x) - \nabla^{**}f(x)\cdot(y-x)\rangle|}{|y-x|}\\
\leq \varepsilon(1+\sqrt{n})\operatorname{Lip}(f) 
+ \frac{|\langle v^*_k, f(y) - f(x) - \nabla^{**}f(x)\cdot(y-x)\rangle|}{|y-x|}.
\end{multline*}
Thanks to the arbitrariness of $\varepsilon$ \eqref{eq:diffV} holds, and the theorem follows.
\end{proof}

\begin{thm} 
Let $V^*$ be a dual Banach space and let $f:\Omega\to V^*$ be a Lipschitz continuous mapping.
Then there exists $\nabla^{*}f :\Omega \to \mathcal B(\mathbb R^n; V^{*})$
such that for any separable subset $D\subset V$ for almost every $x\in\Omega$
it holds 
\begin{equation}
\lim_{y\to x} \frac{\langle v, f(y) - f(x) - \nabla^{*}f(x)\cdot(y-x)\rangle}{|y-x|} = 0
\end{equation}
for all $v\in D$. 
Moreover there exists a sequence $(v_k)_{k\in\mathbb N}$ in $V$ such that
\[
\textnormal{md}(f, x)(\nu) = \sup_k\frac{\langle v, \nabla^{*}f(x)\cdot\nu\rangle}{\|v_k\|_{V}}  \quad \text{ for } \nu\in\mathbb R^n \textnormal{ and for almost every } x\in\Omega.
\] 
\end{thm}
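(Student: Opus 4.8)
The plan is to mirror the proof of Theorem \ref{thm:Vtarget}, exploiting that $V^*$ carries a distinguished predual $V$ that already embeds isometrically into $\textnormal{Lip}_{0}(V^*)$: each $v\in V$ acts on $V^*$ as a linear functional which is Lipschitz with $\operatorname{Lip}(v)=\|v\|_V$ and vanishes at $0$. First I would construct $\nabla^{*}f$. Since $f$ is Lipschitz, the hypothesis \eqref{eq:ww-diff-point} of Lemma \ref{lemma:lip-derivative} holds at every point for every $\varphi\in\textnormal{Lip}_{0}(V^*)$, producing $\partial^\circ_\nu f(x)\in\left(\textnormal{Lip}_{0}(V^*)\right)^*$. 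Restricting this functional to $V$ defines $\partial^{*}_\nu f(x)\in V^*$ by $\langle v,\partial^{*}_\nu f(x)\rangle:=\langle v,\partial^\circ_\nu f(x)\rangle$; the bound $\|\partial^{*}_\nu f(x)\|_{V^*}\leq\|\partial^\circ_\nu f(x)\|\leq\operatorname{Lip}(f)|\nu|$ is immediate because restriction cannot increase the norm and $\|v\|_{\textnormal{Lip}_{0}(V^*)}=\|v\|_V$. Setting $\nabla^{*}f(x)=(\partial^{*}_{1}f(x),\dots,\partial^{*}_{n}f(x))$ gives the desired element of $\mathcal B(\mathbb R^n;V^*)$.

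For the weak* differentiability claim, fix a separable $D\subset V$ and a dense sequence $(v_k)\subset D$. Each $x\mapsto\langle v_k,f(x)\rangle$ is a real-valued Lipschitz function on $\Omega$, hence differentiable off a common null set $N$ by Rademacher's theorem; by Lemma \ref{lemma:lip-derivative} its gradient at $x\in\Omega\setminus N$ equals $\langle v_k,\nabla^{*}f(x)\rangle$, so the stated limit holds for every $v_k$. The passage to an arbitrary $v\in D$ is the density estimate used verbatim in Theorem \ref{thm:Vtarget}: if $\|v-v_k\|_V\leq\varepsilon$, the corresponding difference quotient is controlled by $\varepsilon(1+\sqrt n)\operatorname{Lip}(f)$, and $\varepsilon$ is arbitrary.

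The substantive work is the metric-differential representation, and the key idea is to extract a countable norming family from the predual and feed it into Theorem \ref{theorem:lip-derivative}. Since $\Omega$ is separable and $f$ is continuous, $f(\Omega)$ is a separable subset of $V^*$; let $W$ be the (separable) closed linear span of $f(\Omega)-f(\Omega)$. Using $\|w\|_{V^*}=\sup_{\|v\|_V\leq 1}\langle v,w\rangle$ together with a density argument over a countable dense subset of $W$, I would produce a countable set $(v_k)\subset V$ with $\|v_k\|_V=1$ such that $\|w\|_{V^*}=\sup_k\langle v_k,w\rangle$ for all $w\in W$, adjoining $-v_k$ so that the supremum may be taken without absolute values. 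Viewed as Lipschitz functions $\varphi_k=\langle v_k,\cdot\rangle$ on $V^*$, this family is precisely an essential gauge sequence for $f(\Omega)$. Theorem \ref{theorem:lip-derivative} then applies, since $f$ Lipschitz makes $G=\sup_k|\partial_\nu(\varphi_k\circ f)|$ bounded and hence locally integrable, and yields $\textnormal{md}(f,x)(\nu)=\textnormal{m}\partial_\nu f(x)=\sup_k\langle v_k,\partial^{*}_\nu f(x)\rangle$ for almost every $x$ and for $\nu$ in a countable dense set of directions; the extension to all $\nu\in\mathbb R^n$ is the approximation argument of Lemma \ref{lemma:md=rho}. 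Recalling $\|v_k\|_V=1$ rewrites this as the asserted $\sup_k\langle v_k,\nabla^{*}f(x)\cdot\nu\rangle/\|v_k\|_V$.

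I expect the delicate point to be this last equality: that the full $V^*$-norm of the increments $f(x+h\nu)-f(x)$, rather than merely the norm of the restriction of $\partial^\circ_\nu f(x)$ to $V\subset\textnormal{Lip}_{0}(V^*)$, is recovered by a fixed countable predual family. The inequality $\sup_k\langle v_k,\partial^{*}_\nu f(x)\rangle\leq\textnormal{md}(f,x)(\nu)$ is elementary from $\|v_k\|_V=1$, but the reverse inequality is exactly what Theorem \ref{theorem:lip-derivative} supplies when fed the norming/gauge family, and its availability hinges on the norm-separability of $f(\Omega)$, which guarantees that a single countable family from $V$ norms all relevant increments simultaneously. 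Everything else transfers directly from Theorem \ref{thm:Vtarget} and Lemma \ref{lemma:md=rho}.
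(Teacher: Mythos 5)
Your proposal is correct and follows essentially the same route as the paper: the paper also defines $\partial^{*}_j f(x)$ by restricting $\partial^\circ_j f(x)$ to $V$ via the canonical embedding $J:V\to V^{**}\subset \textnormal{Lip}_0(V^*)$, and then simply says to ``proceed as in the proof of Theorem \ref{thm:Vtarget}'' (Rademacher plus the $\varepsilon$-density estimate for the weak* limit, and the gauge-sequence machinery of Theorem \ref{theorem:lip-derivative} and Lemma \ref{lemma:md=rho} for the metric differential). Your write-up is in fact more complete than the paper's, since you make explicit the one genuinely new ingredient --- that the dual-norm formula $\|w\|_{V^*}=\sup_{\|v\|_V\le 1}\langle v,w\rangle$ yields a countable norming family in the predual $V$ (not in $V^{**}$) for the separable set of increments, which is exactly what makes the essential gauge sequence consist of elements of $V$.
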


\begin{proof}
Again, the mapping $f$ satisfies the assumptions of Lemma \ref{lemma:lip-derivative}.
So there exist weak weak* partial derivatives  $\partial^\circ_\nu f(x)\in\left(\textnormal{Lip}_{0}(V^*)\right)^*$, $j=1,\dots, n$.
Define $\partial^{*}_j f:\Omega\to V^{*}$
by setting $\langle v, \partial^{*}_j f(x)\rangle := \langle J(v), \partial^\circ_\nu f(x) \rangle$, where $J:V\to V^{**}$ is the canonical embedding.
Then setting 
\[
\nabla^{*}f(x) = \big(\partial^{*}_{1}f(x), \dots, \partial^{*}_{n}f(x) \big)
\]
we could proceed as in the proof of Theorem \ref{thm:Vtarget}.

\end{proof}


\bibliographystyle{plain}
\bibliography{bibliography}
\end{document}